\newtheorem{theorem}{Theorem}
\newtheorem{definition}{Definition}
\newtheorem{lemma}{Lemma}
\newtheorem{corollary}{Corollary}
\theoremstyle{definition}
\newtheorem{assumption}{Assumption}
\newtheorem{remark}{Remark}
\title{Online Learning of Feasible Strategies in\\ Unknown Environments}
\author{Santiago Paternain and Alejandro Ribeiro
\thanks{Work in this paper is supported by NSF CNS-1302222 and ONR N00014-12-1-0997. The authors are with the Department of Electrical and Systems Engineering, University of Pennsylvania, 200 South 33rd Street, Philadelphia, PA 19104. Email: \{spater, aribeiro\}@seas.upenn.edu.}}
\begin{document}

\maketitle

%
\begin{abstract}
Define an environment as a set of convex constraint functions that vary arbitrarily over time and consider a cost function that is also convex and arbitrarily varying. Agents that operate in this environment intend to select actions that are feasible for all times while minimizing the cost's time average. Such action is said optimal and can be computed \textit{offline} if the cost and the environment are known a priori. An \textit{online} policy is one that depends causally on the cost and the environment. To compare online policies to the optimal offline action define the fit of a trajectory as a vector that integrates the constraint violations over time and its regret as the cost difference with the optimal action accumulated over time. Fit measures the extent to which an online policy succeeds in learning feasible actions while regret measures its success in learning optimal actions. This paper proposes the use of online policies computed from a saddle point controller which are shown to have fit and regret that are either bounded or grow at a sublinear rate. These properties provide an indication that the controller finds trajectories that are feasible and optimal in a relaxed sense. Concepts are illustrated throughout with the problem of a shepherd that wants to stay close to all sheep in a herd. Numerical experiments show that the saddle point controller allows the shepherd to do so.\end{abstract}

%

\section{Introduction}

The motivation for this paper is the navigation of a time varying convex environment defined as a set of convex constraints that an agent must satisfy at all times. The constraints are unknown a priori, vary arbitrarily in time in a possibly discontinuous manner, and are observed locally in space and causally in time. The goal of the agent is to find a feasible strategy that satisfies all of these constraints. This paper shows that an online version of the saddle point algorithm of Arrow and Hurwicz \cite{arrow_hurwicz} executed by the agent succeeds in finding such strategy. If the agent wants to further minimize a convex cost, we show that the same algorithm succeeds in finding an strategy that is feasible at all times and optimal on average.

To understand the contribution of this paper it is important to observe that the navigation problem outlined above can be mathematically formulated as the solution of a convex program 
\cite{rimon1992exact,warren1989global,Khatib:1986:ROA:6806.6812,ge2000new,vadakkepat2000evolutionary}
whose solution is progressively more challenging when we progress from deterministic settings to stochastic and online settings. Indeed, in a determinist setting the cost and constraints are fixed. This yields a canonical convex optimization problem that can be solved with extremum seeking controllers based on gradient descent \cite{hirsch2004differential, krstic2000stability, ariyur2003real, tan2006non}, primal-dual methods \cite{arrow_hurwicz,nedic2009subgradient,uzawa1958iterative,maistroskii1977gradient,feijer2010stability}, or interior point methods \cite[Chapter 11]{boyd2004convex}. In a stochastic setting cost and constraints are not constant but vary randomly according to a stationary distribution. The agent's goal is then expressed as the selection of an action that minimizes the expected value of the objective function while satisfying constraints in an average sense \cite{ atanasov2012stochastic, azuma2012stochastic, Liu20101443}
This problem is more complicated than its deterministic counterpart but it can be solved using, e.g., stochastic gradient descent \cite{robbins1951stochastic, schmidt2013minimizing, konevcny2013semi} or stochastic quasi-Newton's methods \cite{mokhtari2014res}.

In this paper we consider online formulations in which cost and constraints can vary arbitrarily, perhaps strategically, and where the goal is to find an action that is good on average and that satisfies the constraints at all times -- assuming such an action exists, which, when functions change strategically, restricts adversarial actions. In this case, {\it unconstrained} cost minimization can be formulated in the language of regret \cite{blackwell1956analog, vapnik2000nature, shalev2011online} whereby agents operate online by selecting plays that incur a cost selected by nature. The cost functions are revealed to the agent ex post and used to adapt subsequent plays. The goodness of these {\it online} policies are determined by comparing to the optimal action chosen \textit{offline} by a clairvoyant agent that has prescient access to the cost. Regret is defined as the difference of the accumulated cost attained online and the optimal offline cost. It is a remarkable fact that an online version of gradient descent is able to find plays whose regret grows at a sublinear rate when the cost is a convex function \cite{Zinkevich03, hazan2007logarithmic} -- therefore suggesting vanishing per-play penalties of online plays with respect to the clairvoyant play. 

The constrained optimization equivalent of gradient descent is the saddle point method applied to the determination of a saddle point of the Lagrangian function \cite{arrow_hurwicz}. This method interprets each constraint as a separate potential and descends on a linear combination of their gradients. The coefficients of this linear combination are multipliers that adapt dynamically so as to push the agent to the optimal solution in the feasible region. Saddle point algorithms and variations have been widely studied \cite{nedic2009subgradient,uzawa1958iterative,maistroskii1977gradient,feijer2010stability} and used in various domains such as decentralized control \cite{low1999optimization,chiang2007layering} and image processing, see e.g. \cite{chambolle2011first}. Our observation is that since an online version of gradient descent succeeds in achieving small regret, it is not unreasonable to expect an online saddle point method to succeed in finding feasible actions with small regret.

The main contribution of this paper is to prove that this expectation turns out to be true. We show that an online saddle point algorithm that observes costs and constraints ex post succeeds in finding policies that are feasible and have small regret. Central to this development is the definition of a viable environment as one in which there exist an action that satisfies the time varying constraints at all times and the introduction of the notion of fit (Section \ref{sec:survivability}). The latter is defined as a vector that contains the time integrals of the constraints evaluated across the trajectory and is the analogous of regret for the satisfaction of constraints. In the same way in which the accumulated payoff of the online trajectory is compared with the payoff of the offline trajectory, fit compares the accumulation of the constraints along the trajectory with the feasibility of an offline viable strategy. As such, a trajectory can achieve small fit by becoming feasible at all times or by alternating periods in which the constraints are violated with periods in which the constraints are satisfied with slack. This notion of fit is appropriate for constraints that have a cumulative nature. For cases where this is not appropriate we introduce the notion of saturated fit in which only violations of the constraint are accumulated. A trajectory with small saturated fit is one in which the constraints are violated by a significant amount only for a short period of time.

Technical developments begin with the derivation of a projected gradient controller to limit the growth of regret in an environment without constraints (Section \ref{sec:continuous_regret}). The purpose of this section is to introduce tools and to clarify connections with existing literature in discrete time \cite{Zinkevich03, hazan2007logarithmic} and continuous time regret \cite{viossat2013no, sorin2009exponential, kwon2014continuous}. An important conclusion here is that regret in continuous time can be bounded by a constant that is independent of the time horizon, as opposed to the sublinear growth that is observed in discrete time. 

We then move onto the main part of the paper in which we propose to control fit and regret growth with the use of an online saddle point controller that moves along a linear combination of the negative gradients of the instantaneous constraints and the objective function. The coefficients of this linear combination are adapted dynamically as per the instantaneous constraint functions (Section \ref{sec:main}). This online saddle point controller is a generalization of (offline) saddle point in the same sense that an online gradient controller generalizes (offline) gradient descent. We show that if there exists an action that satisfies the environmental constraints at all times, the online saddle point controller achieves bounded fit if optimality is not of interest (Theorem \ref{theo:not_opti}). When optimality is considered, the controller achieves bounded regret and a fit that grows sublinearly with the time horizon (Theorem \ref{theo:opti}). Analogous results are derived for saturated fit. I.e., it is bounded by a constant when optimality is not of interest and grows sublinearly otherwise (corollaries \ref{corollary_saturated_fit} and \ref{corollary_saturated_fit2}). Throughout the paper we illustrate concepts with the problem of a shepherd that has to stay close to his herd (Section \ref{sec_shepherd_problem}). A numerical analysis of this problem closes the paper (Section \ref{sec:examples}) except for concluding remarks (Section \ref{sec_conclusions}).

\medskip\noindent{\bf Notation.} A multivalued function $f:\reals^n\to\reals^m$ is defined by stacking component functions, i.e., $f:=[f_1,\ldots,f_m]^T$. The notation $\int f(x)dx:=[\int f_1(x)dx,\ldots,\int f_m(x)dx]^T$ represents a vector stacking individual integrals. An inequality $x\leq y$ between vectors $x,y\in\reals^n$ is interpreted componentwise. An inequality $x\leq c$ between a vector $x=[x_1,\ldots,x_n]^T\in\reals^n$ and a scalar $c\in\reals$ means that $x_i\leq c$ for all $i$.

%
\section{Viability, feasibility and optimality}\label{sec:survivability}
We consider a continuous time environment in which an agent selects actions that result in a time varying set of penalties. Use $t$ to denote time and let $X\subseteq \mathbb{R}^n$ be a closed convex set from which the agent selects action $x\in X $. The penalties incurred at time $t$ for selected action $x$ are given by the value $f(t,x)$ of the vector function $f:\reals\times\reals^n \to \reals^m$. We interpret the vector penalty function $f$ as a definition of the environment. Our interest is in situations where the agent is faced with an environment $f$ and must choose an action $x\in X$ -- or perhaps a trajectory $x(t)$ -- that guarantees nonpositive penalties $f(t,x(t))\leq 0$ for all times $t$ not exceeding a time horizon $T$. Since the existence of this trajectory depends on the specific environment we define a viable environment as one in which it is possible to select an action with nonpositive penalty for times $0\leq t\leq T$ as we formally specify next.

%
\begin{definition}[\textbf{Viable environment}]\label{def_viable_environment}
We say that an environment $f:\reals\times\reals^n \to \reals^m$ is viable over the time horizon $T$ for an agent that selects actions $x\in X$ if there exists a feasible action $x^\dagger\in X$ such that 
\begin{equation}\label{eqn_def_viable_environment}
   f(t,x^\dagger) \leq 0, \quad \text{for all\ }  t \in[0,T] .
\end{equation}
The set $X^\dagger:=\{x^\dagger \in X: f(t,x^\dagger) \leq 0,\ \text{for all\ } t \in[0,T]\}$ is termed the feasible set of actions.
\end{definition}

%
Since for a viable environment it is possible to have multiple feasible actions it is desirable to select one that is optimal with respect to some criterion of interest. Introduce then the objective function $f_0:\reals\times\reals^n \to \reals$, where for a given time $t \in [0,T]$ and action $x\in X$ the agent suffers a loss $f_0(t,x)$. The optimal action is defined as the one that minimizes the accumulated loss $\int_0^T f_0(t,x) \,dt$ among all viable actions, i.e.,
\begin{alignat}{2}\label{eqn_optimal_strategy}
   x^* :=\ &\argmin_{x\in X} \int_0^T f_0(t,x) \,dt \\ \nonumber
          &\st\ f(t,x) \leq 0,\ \text{for all\ } t \in [0,T] .
\end{alignat}
For the definition in \eqref{eqn_optimal_strategy} to be valid the function $f_0(t,x)$ has to be integrable with respect to $t$. In subsequent definitions and analyses we also require integrability of the environment $f$ as well as convexity with respect to $x$ as we formally state next.

%
\begin{assumption} \label{as:integrability}
The functions $f(t,x)$ and $f_0(t,x)$ are integrable with respect to $t$ in the interval $[0,T]$. 
\end{assumption}

\begin{assumption}\label{as:convexity}
The functions $f(t,x)$ and $f_0(t,x)$ are convex with respect to $x$ for all times $t\in[0,T]$. 
\end{assumption}

%
If the environment $f(t,x)$ and functions $f_0(t,x)$ are known beforehand, finding the action in a viable environment that minimizes the total aggregate cost is equivalent to solving the convex optimization problem in \eqref{eqn_optimal_strategy} for which a number of algorithms are known. Here, we consider the problem of adapting a strategy $x(t)$ when the functions $f(t,x)$ and $f_0(t,x)$ are {\it arbitrary} and {\it revealed causally.} I.e., we want to choose the action $x(t)$ using observations of viability $f(t,x)$ and cost $f_0(t,x)$ in the open interval $[0,t)$. This implies that $f(t,x(t))$ and $f_0(t,x(t))$ are not observed before choosing $x(t)$. The action $x(t)$ is chosen ex ante and the corresponding viability $f(t,x(t))$ and cost $f_0(t,x(t))$ are incurred ex post. Further observe that the constraints and objective functions may change abruptly if the number of discontinuities in these are finite for finite $T$. This makes the problem different from time varying optimization in which the goal is to track the optimal argument of $f_0(t,x)$ subject to the constraint $f(t,x)\leq 0$ under the assumption that these functions change continuously and at a sufficiently small rate 
\cite{popkov2005gradient,fazlyab2015interior,zavala2010real}.

%
\subsection{Regret and fit}
We evaluate the performance of trajectories $x(t)$ through the concepts of regret and fit. To define regret we compare the accumulated cost $\int_0^T f_0(t,x(t)) \,dt$ incurred by $x(t)$ with the cost incurred by the optimal action $x^*$ defined in \eqref{eqn_optimal_strategy},
\begin{equation}\label{eqn_continuous_regret}
   \ccalR_T := \int_0^T f_0(t,x(t)) \,dt - \int_0^T f_0(t,x^*) \,dt .
\end{equation}
Analogously, we define the fit of the trajectory $x(t)$ as the accumulated penalties $f(t,x(t))$ incurred for times $t\in[0,T]$,
\begin{equation}\label{eqn_def_fit}
   \ccalF_{T} := \int_0^T f(t,x(t)) \,dt.
\end{equation}
The regret $\ccalR_T$ and fit $\ccalF_{T}$ can be interpreted as performance losses associated with online causal operation as opposed to offline clairvoyant operation. If $\ccalF_{T}$ is positive in a viable environment we are in a situation in which, had the environment be known a priori, we could have selected an action $x^\dagger$ with $f(t,x^\dagger) \leq 0$. The fit measures how far the trajectory $x(t)$ comes from achieving that goal. As in the case of the fit, if the regret $\ccalR_T$ is large we are in a situation in which prior knowledge of environment and cost would had resulted in the selection of the action $x^*$ -- and in that sense $\ccalR_T$ indicates how much we regret not having had that information available. 

Because of the cumulative nature of fit, it is possible to achieve small fit by alternating between actions for which the constraint functions take positive and negative values. This is valid when cumulative constraints are an appropriate model, which happens for quantities that can be stored or preserved in some sense -- such as energy budgets enforced through average power constraints. For situations where this is not appropriate, we define the saturated fit in which constraint slacks are saturated to a small constant $\delta$. Formally, let $\delta>0$ be a positive constant and define the function $\bar{f}_{\delta}(t,x)) = \max\left\{f(t,x),-\delta \right\}$. Then, the $\delta$-saturated fit is defined as
\begin{equation}\label{eqn_saturated_fit}
\bar\ccalF_{T} = \int_0^T \bar{f}_{\delta}(t,x(t)) \, dt.
\end{equation}
Since $\bar{f}_{\delta}(t,x)$ is the pointwise maximum of two convex functions with respect to the actions, it is a convex function itself and $\bar\ccalF_{T}$ is not different than the fit for the environment defined by $\bar{f}_{\delta}(t,x)$. By taking small values of $\delta$ we can reduce the negative portion of the fit to be as small as desired.  

A good learning strategy is one in which $x(t)$ approaches $x^*$. In that case, the regret and fit grow for small $T$ but eventually stabilize or, at worst, grow at a sublinear rate. Considering regret $\ccalR_T$ and fit $\ccalF_{T}$ separately, this observation motivates the definitions of feasible trajectories strongly feasible trajectories, and strong optimal trajectories that we formally state next.

%
\begin{definition}\label{def_0ptimality_and_viability}
Given an environment $f:\reals\times\reals^n \to \reals^m$, a cost $f_0:\reals\times\reals^n \to \reals$, and a trajectory $x(t)$ we say that:

\begin{mylist}
\item[\bf Feasibility.] The trajectory $x(t)$ is feasible in the environment if the fit $\ccalF_{T}$ grows sublinearly with $T$. I.e., if there exist a function $h(T)$ with $\limsup_{T\to\infty} h(T)/T = 0$ and a constant vector $C$ such that for all times $T$ it holds,
\begin{equation}\label{eqn_def_weak_survival}
   \ccalF_{T} := \int_0^T f(t,x(t)) \,dt \leq C h(T).
\end{equation} 

\item[\bf Strong Feasibility.] The trajectory $x(t)$ is strongly feasible in the environment if the fit $\ccalF_{T}$ is bounded for all $T$. I.e., if there exists a constant vector $C$ such that for all times $T$ it holds,
\begin{equation}\label{eqn_def_survival}
   \ccalF_{T} := \int_0^T f(t,x(t)) \,dt \leq C.
\end{equation} 

\item[\bf Strong optimality.] The trajectory $x(t)$ is strongly optimal in the environment if the regret $\ccalR_{T}$ is bounded for all $T$. I.e., if there exists a constant $C$ such that for all times $T$ it holds,
\begin{equation}\label{eqn_def_0ptimality}
   \ccalR_{T} := \int_0^T f_0(t,x(t)) \,dt - \int_0^T f_0(t,x^*) \,dt  \leq C.
\end{equation}

\end{mylist} \end{definition}

%
Having the regret satisfy $\ccalR_{T}\leq C$ irrespectively of $T$ is an indication that $f_0(t,x(t))$ is close to $f_0(t,x^*)$ so that the integral stops growing. This is not necessarily so because we can also achieve small regret by having $f_0(t,x(t))$ oscillate above and below $f_0(t,x^*)$ so that positive and negative values of $f_0(t,x(t)) - f_0(t,x^*)$ cancel out. In general, the possibility of having small regret by a trajectory that does not approach $x^*$ is a limitation of the concept of regret. Alternatively, we can think of the optimal offline policy $x^*$ as fixing a budget for cost accumulated across time. An optimal online policy meets that budget up to a constant $C$ -- perhaps by overspending at some times and underspending at some other times.

Likewise, when the fit satisfies $\ccalF_{T}\leq C$ irrespectively of $T$, it suggests that $x(t)$ approaches the feasible set. This need not be true as it is possible to achieve bounded fit by having $f(t,x(t))$ oscillate around $0$. Thus, as in the case of regret, we can interpret strongly feasible trajectories as meeting the {\it accumulated} budget $\int_0^T f(t,x(t))\,dt\leq 0$ up to a constant term $C$. This is in contrast with feasible actions $x^\dagger$ that meet the budget $f(t,x^\dagger)\leq0$ for all times. Feasible trajectories differ from strongly feasible trajectories in that the fit is allowed to grow at a sublinear rate. This means that feasible trajectories do not meet the {\it accumulated} budget within a constant $C$ but do meet the {\it time averaged} budget $(1/T)\int_0^T f(t,x(t))\,dt\leq0$ within that constant. The notion of optimality -- as opposed to strong optimality -- could have been defined as a case in which regret is bounded by a sublinear function of $T$. This is not necessary here because our results state strong optimality.

In this work we solve three different problems: (i) Finding strongly optimal trajectories in unconstrained environments. (ii) Finding strongly feasible trajectories. (iii) Finding feasible, strongly optimal trajectories. We develop these solutions in sections \ref{sec:continuous_regret}, \ref{subsec:non_opti}, and \ref{subsec:otpi}, respectively. Before that, we present two pertinent remarks and we clarify concepts with the introduction of an example.

%
\begin{remark}[\bf Not every trajectory is strongly feasible]
In definition \eqref{eqn_def_survival} we consider the integral of a measurable function in a finite interval, hence it is always bounded by a constant. Yet if the latter depends on the time horizon $T$, the trajectory is not strongly feasible, because it is not uniformly bounded for all time horizons $T$. The same remark is valid for the definitions of strongly optimal and feasible. 
\end{remark}

\begin{remark}[\bf Connection with Stochastic Optimization]
One can think about the online learning framework as a generalization of the stochastic optimization setting (see e.g. \cite{robbins1951stochastic,borkar2008stochastic}). In the latter, the objective and constraint functions depend on a random vector $\theta \in \mathbb{R}^p$. Formally, the cost is a function $f_0: \mathbb{R}^n\times \mathbb{R}^p \to \mathbb{R}$ and the constraints are given by a multivalued function $f: \mathbb{R}^n\times \mathbb{R}^p \to \mathbb{R}^m$. The constrained stochastic optimization problem can be then formulated as 
\begin{equation}\label{eqn_stochastic_problem}
\begin{split}
x^*:=&\argmin \, \E{f_0(x,\theta)} \\
&\;\, \mbox{s.t.} \quad \quad \E{f(x,\theta)} \leq 0,
\end{split}
\end{equation}
where the above expectations are with respect to the random vector $\theta$. When the process that determines the temporal evolution of the random vector $\theta_t$ is stationary, the expectations can be replaced by time averages. In that sense  problem \eqref{eqn_stochastic_problem} is equivalent to the problem of generating trajectories that are feasible and optimal in the sense of Definition \ref{def_0ptimality_and_viability}.
\end{remark}

%
\subsection{The shepherd problem}\label{sec_shepherd_problem}

Consider a target tracking problem in which an agent -- the shepherd -- follows a group of $m$ targets -- the sheep. Specifically, let $z(t) = [z_1(t),z_2(t)]^T \in\reals^2$ denote the position of the shepherd at time $t$. To model smooth paths for the shepherd introduce a polynomial parameterization so that each of the position components $z_k(t)$ can be written as
\begin{equation}\label{eqn_shepherd_position}
   z_k(t) = \sum_{j=0}^{n-1} x_{kj} p_j(t),
\end{equation}
where $p_j(t)$ are polynomials that parameterize the space of possible trajectories. The action space of the shepherd is then given by the vector $x=[x_{10},\ldots,x_{1,n-1},x_{20},\ldots,x_{2,n-1}]^T\in\reals^{2n}$ that stacks the coefficients of the parameterization in \eqref{eqn_shepherd_position}. 

Further define $y_i(t)=[y_{i1}(t), y_{i2}(t)]^T$ as the position of the $i$th sheep at time $t$ for $i=1,\ldots,m$ and introduce a maximum allowable distance $r_i$ between the shepherd and each of the sheep . The goal of the shepherd is to find a path $z(t)$ that is within distance $r_i$ of sheep $i$ for all sheep. This can be captured by defining an $m$-dimensional environment $f$ with each component function $f_i$ defined as
\begin{equation}\label{eqn_sheep_environment}
   f_i(t,x) = \| z(t) - y_i(t)   \|^2 - r_i^2  \quad \mbox{for all} \quad i=1..m.
\end{equation}
That the environment defined by \eqref{eqn_sheep_environment} is viable means that it is possible to select a vector of coefficients $x$ so that the shepherd's trajectory given by \eqref{eqn_shepherd_position} stays close to all sheep for all times. To the extent that \eqref{eqn_shepherd_position} is a loose parameterization -- we can approximate arbitrary functions with sufficiently large index $n$, if the time horizon is fixed and not allowed to tend to infinity --, this simply means that the sheep are sufficiently close to each other at all times. E.g., if $r_i=r$ for all times, viability is equivalent to having a maximum separation between sheep smaller than $2r$. 

As an example of a problem with an optimality criterion say that the first target -- the black sheep -- is preferred in that the shepherd wants to stay as close as possible to it. We can accomplish that by introducing the objective function
\begin{equation}\label{eqn_black_sheep}
   f_0(t,x) = \| z(t) - y_1(t)   \|^2  .
\end{equation}
Alternatively, we can require the shepherd to minimize the work required to follow the sheep. This behavior can be induced by minimizing the integral of the acceleration which in turn can be accomplished by defining the optimality criterion [cf. \eqref{eqn_optimal_strategy}], 
\begin{equation}\label{eqn_minimum acceleration}
   f_0(t,x) = \big\|\ddot z(t)\big\| 
            = \Bigg\| \bigg[ \sum_{j=0}^{n-1} x_{1j} \ddot p_j(t),\
                         \sum_{j=0}^{n-1} x_{2j} \ddot p_j(t) \bigg] \Bigg\|.
\end{equation}
Trajectories $x(t)$ differ from actions in that they are allowed to change over time, i.e., the constant values $x_{kj}$ in \eqref{eqn_shepherd_position} are replaced by the time varying values $x_{kj}(t)$. A feasible or strongly feasible trajectory $x(t)$ means that the shepherd is repositioning to stay close to all sheep. An optimal trajectory with respect to \eqref{eqn_black_sheep} is one in which he does so while staying as close as possible to the black sheep. An optimal trajectory with respect to \eqref{eqn_minimum acceleration} is one in which the work required to follow the sheep is minimized. In all three cases we apply the usual caveat that small fit and regret may be achieved with stretches of underachievement following stretches of overachievement. 
%
%
%
\section{Unconstrained regret in continuous time. }\label{sec:continuous_regret}
Before considering the feasibility problem we consider the following unconstrained minimization problem. Given an unconstrained environment ($f(t,x) \equiv 0$) our goal is to generate strong optimal trajectories $x(t)$ in the sense of Definition \ref{def_0ptimality_and_viability}, selecting actions from a closed convex set $X$, i.e., $x(t) \in X$ for all $t\in [0,T]$. Given the convexity of the objective function with respect to the action, as per Assumption \ref{as:convexity}, it is natural to consider a gradient descent controller. To avoid restricting attention to functions that are differentiable with respect to $x$, we  work with subgradients. For a convex function $g:X\to \mathbb{R}$ a subgradient $g_x$ satisfies the 
\begin{equation}\label{eqn_def_subgradient}
   g(y) \geq g(x) + g_x(x)^T(y-x) \quad \mbox{for all} \quad y\in X.
\end{equation}
In general, subgradients are defined at all points for all convex functions. At the points where the function $f$ is differentiable the subgradient and the gradient coincide. In the case of vector functions $f:\mathbb{R}^n \rightarrow \mathbb{R}^m$ we group the subgradients of each component into a matrix $f_x(x)\in\reals^{n\times m}$ defined as
\begin{equation}\label{eqn_subgradient_f}
   f_x(x) = \left[ \begin{array}{c c c c} f_{1,x}(x)  & f_{2,x}(x) & \cdot\cdot\cdot & f_{m,x}(x) \end{array}\right],
\end{equation}
where $f_{i,x}(x)$ is a subgradient of $f_i(x)$. 
In addition, since the action must always be selected from the set $X$ we define the controller in a way that the actions are the solution of a projected dynamical system over the set $X$. The solution has been studied in 
\cite{Zhang95} and we define the notion as follow. 
%
%
\begin{definition}[Projected dynamical system]\label{def_projected_dynamical_system}
Let $X$  be a closed convex set.

\begin{mylist}
\item[\bf Projection of a point.] For any $z \in R^n$, there exits a unique element in $X$, denoted $P_X(z)$ such that
\begin{equation}\label{eqn_proj_over_X}
P_X(z) = \argmin_{y \in X} \|y-z \|.
\end{equation}

\item[\bf Projection of a vector at a point.]Let $x \in X$ and $v$ a vector, the projection of $v$ over the set $X$ at the point $x$ is
\begin{equation}
\Pi_X(x,v) = \lim_{\delta \to 0^+}\left(P_X(x+\delta v) -x\right) / \delta.
\end{equation}

\item[\bf Projected dynamical system.]Given a closed convex set $X$ and a vector field $F(t,x)$ which takes elements from $\mathbb{R}\times X$ into $\reals^n$ the projected differential equation associated with $X$ and $F$ is defined to be
\begin{equation}
\dot{x}(t) = \Pi_X\left(x,F(t,x)\right).
\end{equation}
\end{mylist}
\end{definition}
In the above projection if the point $x$ is in the interior of $X$ then the projection is not different from the original vector field, i.e., $\Pi_X(x,F(t,x)) = F(t,x)$. On the other hand if the point $x$ is in the border of $X$, then the projection is just the component of the vector field that is tangential to the set $X$ at the point $x$. Let's consider for instance the case where the set $X$ is a box in $R^n$. Let $X = [a_1,b_1] \times ... \times [a_n,b_n]$ where $a_1 .. a_n$ and $b_1 ... b_n$ are real numbers. Then for each component of the vector field we have that
\begin{equation}
\Pi_X\left(x,F(t,x)\right)_i=\left\{ \begin{array}{l} 0 \quad \mbox{if} \quad x_i = a_i \quad \mbox{and} \quad F(t,x)_i < 0, \\
0 \quad \mbox{if} \quad x_i = b_i \quad \mbox{and} \quad F(t,x)_i > 0 ,\\
F(t,x)_i \quad \mbox{otherwise}.
\end{array}
\right.
\end{equation}
Therefore, when the projection is included, the proposed controller takes the form of the following projected dynamical system:
\begin{equation}\label{eqn_gradient_controller}
\dot{x} = \Pi_X\left(x,-\varepsilon f_{0,x}(t,x)\right),
\end{equation}
where $\varepsilon>0$ is the gain of the controller. Before stating the first theorem we need a Lemma concerning the relation between the original vector field and the projected vector field. This lemma is used in the proofs of theorems \ref{theo:first_theo}, \ref{theo:not_opti} and \ref{theo:opti}. 
\begin{lemma}\label{lemma:big_lemma}
Let $X$ be a convex set and $x_0 \in X$ and $x \in X$. Then
\begin{equation}\label{eqn_big_lemma}
(x_0-x)^T \Pi_X(x_0,v) \leq (x_0-x)^T v.
\end{equation}
\end{lemma}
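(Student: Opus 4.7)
The plan is to rely on the standard variational characterization of the Euclidean projection $P_X$: for every $z\in\reals^n$ and every $y\in X$,
$$
(z-P_X(z))^{T}(y-P_X(z))\le 0,
$$
which is simply the first-order optimality condition for the strongly convex program \eqref{eqn_proj_over_X}. The strategy is to apply this inequality along the curve $z_\delta := x_0+\delta v$ with small $\delta>0$ and then pass to the limit $\delta\to 0^{+}$ dictated by the definition of $\Pi_X(x_0,v)$.

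First I would set $p_\delta := P_X(x_0+\delta v)$ and instantiate the variational inequality with $y=x\in X$, obtaining $(x_0+\delta v-p_\delta)^{T}(x-p_\delta)\le 0$. Flipping signs and dividing through by $\delta>0$ rearranges this into
$$
(p_\delta-x)^{T}v \;\geq\; (p_\delta-x)^{T}\,\frac{p_\delta-x_0}{\delta}.
$$
Second, I would observe that because $x_0\in X$ we have $P_X(x_0)=x_0$, and since $P_X$ is nonexpansive (another consequence of the same variational inequality, applied at two points), $p_\delta\to x_0$ as $\delta\to 0^{+}$. Hence the left-hand side tends to $(x_0-x)^{T}v$. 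On the right-hand side, the prefactor $p_\delta-x$ tends to $x_0-x$, while the quotient $(p_\delta-x_0)/\delta$ tends to $\Pi_X(x_0,v)$ by its very definition. Passing to the limit yields
$$
(x_0-x)^{T}\,\Pi_X(x_0,v)\;\le\;(x_0-x)^{T}v,
$$
which is exactly \eqref{eqn_big_lemma}.

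The main obstacle, though a mild one, is handling the limit cleanly: one needs the one-sided derivative defining $\Pi_X(x_0,v)$ to exist and the inner product to commute with the limit. Both follow from the $1$-Lipschitz continuity of $P_X$ on a closed convex set, which is already presumed by the projected-dynamical-systems framework the paper imports from \cite{Zhang95}. Once this is accepted the proof is a single algebraic rearrangement followed by a limit, with no additional convexity or regularity hypotheses on $v$.
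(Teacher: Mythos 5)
Your proof is correct, but it follows a genuinely different route from the paper's. The paper first invokes a result from \cite{Zhang95} identifying $\Pi_X(x_0,v)$ with the projection of $v$ onto the tangent cone $T_X(x_0)$, and then argues by cases (interior point, boundary point with $v$ tangent, boundary point with $v$ pointing outward) using a supporting hyperplane at $x_0$ and a decomposition of $x_0-x$ and $v$ along the normal $a$ and a tangential direction $a_\perp$. You instead work directly from the variational characterization $(z-P_X(z))^T(y-P_X(z))\le 0$ of the static projection, apply it at $z=x_0+\delta v$ with $y=x$, divide by $\delta$, and let $\delta\to 0^+$ so that the difference quotient $(P_X(x_0+\delta v)-x_0)/\delta$ becomes $\Pi_X(x_0,v)$ by Definition \ref{def_projected_dynamical_system}. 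This buys uniformity: there is no case analysis, no appeal to the tangent-cone identity, and no smoothness-versus-corner discussion of the boundary (the paper's decomposition $\Pi_X(x_0,v)=(a_\perp^Tv)a_\perp$ along a single tangential direction is really only transparent in low dimensions, whereas your argument is dimension-free and works verbatim for any closed convex $X$). The limit passage is also safe: nonexpansiveness gives $\|P_X(x_0+\delta v)-x_0\|\le\delta\|v\|$, so the error term $(p_\delta-x_0)^T(p_\delta-x_0)/\delta$ vanishes and the remaining factor converges by definition of $\Pi_X$. One small caveat: the existence of the one-sided limit defining $\Pi_X(x_0,v)$ does not follow from $1$-Lipschitz continuity of $P_X$ alone (Lipschitzness only bounds the difference quotients); it is a separate fact, which in this paper is built into Definition \ref{def_projected_dynamical_system} and justified via \cite{Zhang95}, so your taking it as given is acceptable, but the parenthetical attribution to nonexpansiveness should be dropped or corrected.
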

\begin{proof}
See Apendix \ref{ap_big_lema_proof}.
\end{proof}

%
Let's define an Energy function $V_\bbarx:\reals^n \to \reals$ as
\begin{equation}\label{eqn_Lyapunov_opti}
V_\bbarx(x) =    \frac{1}{2} (x-\bbarx)^T(x-\bbarx).
\end{equation}
Where $\bbarx \in X \subset \reals^n$ is an arbitrary fixed action. We are now in conditions to present the first theorem, which states that the solution of the gradient controller defined in \eqref{eqn_gradient_controller} is a strongly optimal trajectory, i.e., with bounded regret for all $T$.

%
\begin{theorem}\label{theo:first_theo} Let $f_0: \reals\times X \to \reals$ be cost function satisfying assumptions 1 and 2, with  $X \subseteq \reals^n$ convex. The solution $x(t)$ of the online projected gradient controller in \eqref{eqn_gradient_controller} is strongly optimal in the sense of Definition \ref{def_0ptimality_and_viability}. In particular, the regret $\ccalR_T$ can be bounded by 
\begin{equation}\label{eqn_theo_first_theo}
   \ccalR_{T} \leq V_{x^*}\left(x(0)\right) / \varepsilon, \quad \text{for all\ } T\\
\end{equation}
where $V_\bbarx$ is the Energy function in \eqref{eqn_Lyapunov_opti}.\end{theorem}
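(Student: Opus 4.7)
The plan is a standard Lyapunov/energy argument using the function $V_{x^*}(x) = \tfrac{1}{2}\|x - x^*\|^2$ as a "distance to the offline optimum" and showing that it decreases enough to absorb the instantaneous regret $f_0(t,x(t)) - f_0(t,x^*)$.

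First I would differentiate $V_{x^*}(x(t))$ along the trajectory generated by the controller \eqref{eqn_gradient_controller}. By the chain rule,
\begin{equation*}
   \dot V_{x^*}(x(t)) \;=\; \bigl(x(t)-x^*\bigr)^{T}\,\dot x(t)
   \;=\; \bigl(x(t)-x^*\bigr)^{T}\,\Pi_X\bigl(x(t),\,-\varepsilon f_{0,x}(t,x(t))\bigr).
\end{equation*}
At this point I would invoke Lemma \ref{lemma:big_lemma} with $x_0 = x(t) \in X$ and $x = x^* \in X$ (note that $x^* \in X$ by \eqref{eqn_optimal_strategy}) and $v = -\varepsilon f_{0,x}(t,x(t))$ to peel off the projection and conclude
\begin{equation*}
   \dot V_{x^*}(x(t)) \;\leq\; -\varepsilon\,\bigl(x(t)-x^*\bigr)^{T} f_{0,x}(t,x(t)).
\end{equation*}

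Next I would exploit convexity of $f_0(t,\cdot)$ (Assumption \ref{as:convexity}) via the subgradient inequality \eqref{eqn_def_subgradient}, which gives $f_{0,x}(t,x(t))^{T}(x(t)-x^*) \geq f_0(t,x(t)) - f_0(t,x^*)$. Substituting yields the key pointwise bound
\begin{equation*}
   \dot V_{x^*}(x(t)) \;\leq\; -\varepsilon\bigl[f_0(t,x(t)) - f_0(t,x^*)\bigr].
\end{equation*}

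Finally I would integrate over $[0,T]$ and use non-negativity of $V_{x^*}$: the left side telescopes to $V_{x^*}(x(T)) - V_{x^*}(x(0)) \geq -V_{x^*}(x(0))$, while the right side is $-\varepsilon\,\ccalR_T$ by the definition \eqref{eqn_continuous_regret}. Rearranging gives $\ccalR_T \leq V_{x^*}(x(0))/\varepsilon$, which is the desired bound, uniform in $T$, hence strong optimality in the sense of Definition \ref{def_0ptimality_and_viability}.

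The only delicate step is the first one: the projection $\Pi_X$ inside a Lyapunov derivative would normally complicate the analysis because $\dot x$ need not be equal to the unprojected vector field. Lemma \ref{lemma:big_lemma} is precisely tailored to remove this difficulty by bounding the inner product of $(x-x^*)$ with the projected field by the same inner product with the original field; once that is granted, the remainder is a textbook convexity-and-integration manipulation, and integrability of $f_0(t,x(t))$ over $[0,T]$ (Assumption \ref{as:integrability}) ensures all integrals are well defined.
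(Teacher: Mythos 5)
Your proposal is correct and follows essentially the same argument as the paper: differentiate the energy function along the trajectory, strip the projection with Lemma \ref{lemma:big_lemma}, apply the subgradient inequality, and integrate using $V_{x^*}\geq 0$. The only cosmetic difference is that the paper carries an arbitrary comparison point $\bbarx\in X$ through the computation and specializes to $\bbarx=x^*$ at the end, whereas you fix $x^*$ from the start; this changes nothing of substance.
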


%
\begin{proof}
Consider an action trajectory $x(t)$, an arbitrary given action $\bbarx \in X$, and the corresponding energy function $V_\bbarx(x(t))$ as per \eqref{eqn_Lyapunov_opti}. The derivative $\dot V_\bbarx(x(t))$ of the energy function with respect to time is then given by
\begin{equation}\label{eqn_theo_opti_pf_21}
   \dot{V}_\bbarx(x(t)) = (x(t) - \bbarx)^T\dot{x}(t).
\end{equation}
If the trajectory $x(t)$ follows from the online projected gradient dynamical system in \eqref{eqn_gradient_controller} we can substitute the trajectory derivative $\dot x$ by the vector field value and reduce \eqref{eqn_theo_opti_pf_21} to
\begin{equation}\label{eqn_theo_opti_pf_22}
   \dot{V}_\bbarx(x(t)) = (x(t) - \bbarx)^T \Pi_X \left(x(t),-\varepsilon f_{0,x}(t,x(t))\right).
\end{equation}
Use now the result in Lemma \ref{lemma:big_lemma} with $v=-\varepsilon f_{0,x}(t,x(t))$ to remove the projection operator from \eqref{eqn_theo_opti_pf_22} and write
\begin{equation}\label{eqn_theo_opti_pf_23}
   \dot{V}_\bbarx(x(t)) \leq -\varepsilon (x(t)-\bbarx)^Tf_{0,x}(t,x(t)).
\end{equation}
Using the defining equation of a subgradient \eqref{eqn_def_subgradient}, we can upper bound the inner product $-(x(t)-\bbarx )^T f_{0,x}(t,x(t))$ by the difference $f_0(t,\bbarx) - f_0(t,x(t))$ and transform \eqref{eqn_theo_opti_pf_23} into
\begin{equation}
   \dot{V}_\bbarx(x(t))\leq \varepsilon\left(f_0(t,\bbarx) - f_0(t,x(t))\right).
\end{equation}
Rearranging terms in the preceding inequality and integrating over time yields 
\begin{equation}\label{eqn_theo1basic}
   \int_0^T f_0(t,x(t)) \,dt - \int_0^T f_0(t,\bbarx) \,dt  
      \leq -\frac{1}{\varepsilon}\int_0^T \dot{V}_\bbarx(x(t))\,dt .
\end{equation}
Since the primitive of $\dot{V}_\bbarx(x(t))$ is $V_\bbarx(x(t))$ we can evaluate the integral on the right hand side of \eqref{eqn_theo1basic} and further use the fact that $V_\bbarx (x) \geq 0$ for all $x\in\reals^n$ to conclude that
\begin{equation}\label{eqn_theo1_aux}
    -\int_0^T \dot{V}_\bbarx(x(t)) dt 
        \ =   \  V_\bbarx(x(0)) - V_\bbarx (x(T))
        \ \leq\  V_\bbarx\left(x(0)\right) .
\end{equation}
Combining the bounds in \eqref{eqn_theo1basic} and \eqref{eqn_theo1_aux} we have that
\begin{equation}\label{eqn_theo1almost}
   \int_0^T f_0(t,x(t)) \,dt-\int_0^T f_0(t,\bbarx) \,dt  
       \leq  V_\bbarx(x(0)) / \varepsilon .
\end{equation}
Since the above inequality holds for an arbitrary point $\bbarx \in \reals^n$ it holds for $\bbarx=x^*$ in particular. When making $\bbarx=x^*$ in \eqref{eqn_theo1almost} the left hand side reduces to the regret $\ccalR_T$ associated with the trajectory $x(t)$ [cf. \eqref{eqn_continuous_regret}] and in the right hand side we have $V_\bbarx(x(0))/ \varepsilon = V_{x^*}(x(0))/ \varepsilon$. Eq. \eqref{eqn_theo_first_theo} follows because  \eqref{eqn_theo1almost} is true for all times $T$. This implies that the trajectory is strongly optimal according to \eqref{eqn_def_0ptimality} in Definition \ref{def_0ptimality_and_viability}. \end{proof}

%
The strong optimality of the online projected gradient controller in \eqref{eqn_gradient_controller} that we claim in Theorem \ref{theo:first_theo} is not a straightforward generalization of the optimality of gradient controllers in constant convex potentials. The functions $f_0$ are allowed to change arbitrarily over time and are not observed until after the cost $f_0(t,x(t))$ has been incurred.

Since the initial value of the Energy function $V_{x^*}(x(0))$ is the square of the distance between $x(0)$ and $x^*$, the bound on the regret in \eqref{eqn_theo_first_theo} shows that the closer we start to the optimal point the smaller the accumulated cost is. Likewise, the larger the controller gain $\varepsilon$, the smaller the bound on the regret is. Theoretically, we can make this bound arbitrarily small. This is not possible in practice because larger $\varepsilon$ entails trajectories with larger derivatives which cannot be implemented in systems with physical constraints. In the example in Section \ref{sec_shepherd_problem} the derivatives of the state $x(t)$ control the speed and acceleration of the shepherd. The physical limits of these quantities along with an upper bound on the cost gradient $f_{0,x}(t,x)$ can be used to estimate the largest allowable gain $\varepsilon$. 

Another observation regarding the bound on the regret is that it does not depend on the function that we are minimizing --except for the location of the point $x^*$. For instance by scaling a function the bound on the regret is kept constant if the same gain $\varepsilon$ can be selected. This is not surprising since a scaling in the function implies a bigger cost but it also entails a larger action derivative, which allows to track better changes on the function. However, if a bound on the maximum allowed gain exists then the regret bound cannot be invariant to scalings.
%
\begin{remark}\normalfont
In discrete time systems where $t$ is a natural variable and the integrals in \eqref{eqn_continuous_regret} are replaced by sums, online gradient descent algorithms are used to reduce regret; see e.g. \cite{Zinkevich03,hazan2007logarithmic}. The online gradient controller in \eqref{eqn_gradient_controller} is a direct generalization of online gradient descent to continuous time. This similarity notwithstanding, the result in Theorem \ref{theo:first_theo} is stronger than the corresponding bound on the regret in discrete time which states a sublinear growth at a rate not faster than $\sqrt{T}$ if the cost function is convex \cite{Zinkevich03}, and $\log{T}$ if the cost function is strictly convex \cite{hazan2007logarithmic}. The key where this difference lies is in the fact that discrete time algorithms have to "pay" to switch from the action at time $t$ to the action at time $t+1$. In the proofs of  \cite{Zinkevich03,hazan2007logarithmic} a term related to the norm square of the gradient is present in the upper bound on the regret while in continuous time this term is absent. The bound on the norm of the gradient is related to the selecting a different action. As in the case of fictitious plays that lead to no regret in the continuous time but not in discrete time (see e.g.\cite{viossat2013no,hart2001general,young1993evolution}) the bounds on the regret in continuous time are tighter than in discrete time for online gradient descent. 
\end{remark}


%
\section{Saddle point algorithm}
\label{sec:main}

Given an environment $f(t,x)$ and an objective function $f_0(t,x)$ verifying assumptions \ref{as:integrability} and \ref{as:convexity} we set our attention towards two different problems: design a controller whose solution is a strongly feasible trajectory and a controller whose solution is a feasible and strongly optimal trajectory. As already noted, when the environment is known beforehand the problem of finding such trajectories is a constrained convex optimization problem, which we can solve using the saddle point algorithm of Arrow and Hurwicz \cite{arrow_hurwicz}. Following this idea, let $\lambda \in \Lambda =\reals^m_+$, be a multiplier and define the time-varying Lagrangian associated with the online problem as 
\begin{equation}\label{eqn_lagrangian}
\mathcal{L}(t,x,\lambda) = f_0(t,x)+\lambda^Tf(t,x). 
\end{equation}
Saddle point methods rely on the fact that for a constrained convex optimization problem, a pair is a primal-dual optimal solution if and only if the pair is a saddle point of the Lagrangian associated with the problem; see e.g. \cite{boyd2004convex}. The main idea of the algorithm is then to generate trajectories that descend in the opposite direction of the gradient of the Lagrangian with respect to $x$ and that ascend in the direction of the gradient with respect to $\lambda$. 

Since the Lagrangian is differentiable with respect to $\lambda$, we denote by $\mathcal{L}_{\lambda}(t,x,\lambda)=f(t,x)$ the derivative of the Lagrangian with respect to $\lambda$. On the other hand, since the functions $f_0(\cdot,x)$ and $f(\cdot,x)$ are convex, the Lagrangian is also convex with respect to $x$. Thus, its subgradient with respect to $x$ always exist, let us denote it by $\mathcal{L}_x(t,x,\lambda)$. Let $\varepsilon$ be the gain of the controller, then following the ideas in \cite{arrow_hurwicz} we define a controller that descends in the direction of the subgradient with respect to the action $x$
\begin{align}\label{eqn_action_descent}
\dot{x} &\ =\ \Pi_X \left( x,- \varepsilon \mathcal{L}_x(t,x,\lambda) \right) \nonumber\\
        &\ =\ \Pi_X \left(x,-\varepsilon(f_{0,x}(t,x)+ f_x(t,x)\lambda) \right),
\end{align}
 and that ascends in the direction of the subgradient with respect to the multiplier $\lambda$
\begin{equation}\label{eqn_multiplier_ascent}
\dot{\lambda} = \Pi_{\Lambda} \left( \lambda,\varepsilon \mathcal{L}_{\lambda}(t,x,\lambda) \right)
= \Pi_{\Lambda} \left(\lambda, \varepsilon  f(t,x) \right).
\end{equation}
The projection over the set $X$ in \eqref{eqn_action_descent} is done to assure that the trajectory is always in the set of possible actions. The operator $\Pi_{\Lambda}(\lambda,f)$ is a projected dynamical system in the sense of Definition \ref{def_projected_dynamical_system} over the set $\Lambda$. This projection is done to assure that $\lambda(t) \in \reals^m_+$ for all times $t \in [0,T]$. An important observation regarding \eqref{eqn_action_descent} and \eqref{eqn_multiplier_ascent} is that the environment is observed locally in space and causally in time. The values of the environment constraints and its subgradients are observed at the current trajectory position $x(t)$ and the values of $f(t,x(t))$ and $f_x(t,x(t))$ affect the derivatives of $x(t)$ and $\lambda(t)$ only. Notice that if the environment function satisfies $f(t,x) \equiv 0$ we recover the algorithm defined in \eqref{eqn_gradient_controller} as a particular case of the saddle point controller. 

A block diagram for the controller in \eqref{eqn_action_descent} - \eqref{eqn_multiplier_ascent} is shown in Figure \ref{fig_block_diagram}. The controller operates in an environment to which it inputs at time $t$ an action $x(t)$ that results in a penalty $f(t,x(t))$ and cost $f_0(t,x(t))$. The value of these functions and their subgradients $f_x(t,x(t))$ and $f_{0,x}(t,x(t))$ are observed and fed to the multiplier and action feedback loops. The action feedback loop behaves like a weighted gradient descent controller. We move in the direction given by a linear combination of the the gradient of the objective function $f_{0,x}(t,x(t))$ and the constraint subgradients $f_{i,x}(t,x(t))$ weighted by their corresponding multipliers $\lambda_i(t)$. Intuitively, this pushes $x(t)$ towards satisfying the constraints and to the minimum of the objective function in the set where constraints are satisfied. However, the question remains of how much weight to give to each constraint. This is the task of the multiplier feedback loop. When constraint $i$ is violated we have $f_{i}(t,x(t))>0$. This pushes the multiplier $\lambda_i(t)$ up, thereby increasing the force $\lambda_i(t)f_{i,x}(t,x(t))$ pushing $x(t)$ towards satisfying the constraint. If the constraint is satisfied, we have $f_{i}(t,x(t))<0$, the multiplier $\lambda_i(t)$ being decreased, and the corresponding force decreasing. The more that constraint $i$ is violated, the faster we increase the multiplier, and the more we increase the force that pushes $x(t)$ towards satisfying $f_{i}(t,x(t))<0$. If the constraint is satisfied, the force is decreased and may eventually vanish altogether if we reach the point of making $\lambda_i(t)=0$.

%
\begin{figure}\centering
\resizebox{9cm}{7cm}{

\def \thisplotscale {1}
\def \unit {\thisplotscale cm}

\tikzstyle{block} = [draw, 
                     rectangle,
                     minimum height = 1*\unit, 
                     minimum width  = 4*\unit,
                     inner sep=3pt]

\tikzstyle{int block} = [draw, 
                     rectangle,
                     minimum height = 1*\unit, 
                     minimum width  = 1*\unit,
                     inner sep=3pt]

\begin{tikzpicture}[-stealth, shorten >=0, font=\small]

   \path (-4.5,2.2) 
          node 
         [anchor = north west,
          fill = red!10, opacity = 0.5,
          rectangle, 
          minimum height = 3.3*\unit, 
          minimum width  = 11.5*\unit] (action descent) {};
   \path (action descent.north west) ++ (0,-0.25)
         node [right]{ \footnotesize Gradient descent on actions};

   \path (-4.5,-6.7) node 
         [anchor = south west,
          fill = blue!10, opacity = 0.5,
          rectangle, 
          minimum height = 3.2*\unit, 
          minimum width  = 11.5*\unit] (dual ascent)  {};
   \path (dual ascent.south west) ++ (0,+0.25)
         node [right]{ \footnotesize Gradient ascent on multipliers};

   \path (-4.5,-3.25) node 
         [anchor = south west,
          fill = green!10, opacity = 0.5,
          rectangle, 
          minimum height = 1.88*\unit, 
          minimum width  = 11.5*\unit] (environment) {};
   \path (environment.south east) ++ (0,+0.25)
         node [left]{ \footnotesize Environment};
         
   \path          node [block] (gradx)  
                               {$\Pi_X\Big(x(t),-\varepsilon \left[f_{0,x}(t,x(t))+f_x(t,x(t))\lambda(t)\right]\Big)$}                            
        ++ (0,-4.5) node [block] (gradlambda) 
                               {$\Pi_\Lambda\Big(\lambda(t),\varepsilon f(t,x(t))\Big)$};
   \path (gradx) ++ (5, 0)  node [int block] (intx)      {$\int$};
   \path (gradlambda-|intx) node [int block] (intlambda) {$\int$};

   \path (gradx) ++ (0, -2.15)  node [block] (environment)  {$f(t,x(t))$, $f_x(t,x(t))$, $f_{0,x}(t,x(t))$};

   \path (gradx) edge [above] node {$\dot x(t)$} (intx);    
   \path (gradlambda) edge [above] node {$\dot \lambda(t)$} (intlambda);    
   \path (intx) edge [above, pos=0.8] node {$x(t)$} ++ (2.5,0);    

   \path[draw] (intx.east) ++ (0.5,0) -- ++(0, 1.5) -- ++ (-10,0) -- ++ (0,-1.25) -- ++ (0.77,0); 
   \path[draw](intx.east) ++ (0.5,0) --++(0,-2.15)--++(environment);   

   \path[draw] (intlambda.east) 
               -- ++ (0.5,0) node [above] {$\lambda(t)$} 
               -- ++(0,-1.5) -- ++ (-10.015,0) -- ++ (0,1.5) 
               -- ++ (2,0);    

   \path[draw] (gradlambda.west) ++(-2,0) -- ++ (0,4.25) -- ++(0.78,0);    

   \path (environment) edge node [right] {} (gradx);    
   \path (environment) edge node [right] {} (gradlambda);

\end{tikzpicture}
}
\caption{Block diagram of the saddle point controller. Once that action $x(t)$ is selected at time $t$, we measure the corresponding values of $f(t,x(t))$, $f_x(t,x(t))$ and $f_{0,x}(t,x(t))$. This information is fed to the two feedback loops. The action loop defines the descent direction by computing weighted averages of the subgradients $f_x(t,x(t))$ and $f_{0,x}(t,x(t))$. The multiplier loop uses $f(t,x(t))$ to update the corresponding weights.} 
\label{fig_block_diagram}
\end{figure}
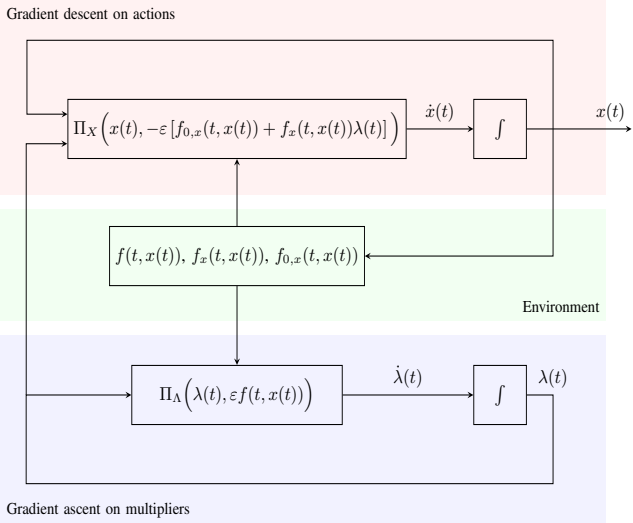

%
\subsection {Strongly feasible trajectories}\label{subsec:non_opti}

We begin by studying the saddle point controller defined by \eqref{eqn_action_descent} and \eqref{eqn_multiplier_ascent} in a problem in which optimality is {\it not} taken into account, i.e., $f_0(t,x) \equiv 0$. In this case the action descent equation of the controller \eqref{eqn_action_descent} takes the form:
\begin{equation}\label{eqn_non_opti}
 \dot{x} = \Pi_X \left( x,- \varepsilon \mathcal{L}_x(t,x,\lambda) \right) = \Pi_X \left(x,-\varepsilon f_x(t,x)\lambda \right), 
\end{equation}
while the multiplier ascent equation \eqref{eqn_multiplier_ascent} remains unchanged. The bounds to be derived for the fit ensure that the trajectories $x(t)$ are strongly feasible in the sense of Definition \ref{def_0ptimality_and_viability}. To state the result consider an arbitrary fixed action $\bar{x} \in X$ and an arbitrary multiplier $\bar{\lambda} \in \Lambda$ and define the energy function
\begin{equation}\label{eqn_lyapunov}
   V_{\bbarx,\bar{\lambda}}(x,\lambda) 
       = \frac{1}{2}\left( ||x-\bbarx||^2+||\lambda -\bar{\lambda}||^2\right).
\end{equation}
We can then bound fit in terms of the initial value $V_{\bbarx,\bar{\lambda}}(x(0),\lambda(0))$ of the energy function for properly chosen $\bbarx$ and $\bar{\lambda}$ as we formally state next.
\begin{theorem}\label{theo:not_opti}
Let $f: \reals \times X \to \reals^m$, satisfying assumptions \ref{as:integrability} and \ref{as:convexity},  where $X \subseteq \reals^n$ is a convex set. If the environment is viable, then the solution $x(t)$ of the dynamical system defined by \eqref{eqn_non_opti} and \eqref{eqn_multiplier_ascent} is strongly feasible for all $T>0$. Specifically, the fit is bounded by
\begin{align}\label{eqn_fit_bound_non_opti}
   \mathcal{F}_{T,i} \leq \min_{x^\dagger \in X^\dagger} \frac{1}{\varepsilon} V_{x^\dagger, e_{i}}(x(0),\lambda(0)),
\end{align}
where $e_i$ with $i=1..m$ form the canonical base of $\reals^m$. 
\end{theorem}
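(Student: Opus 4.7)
The plan is to mimic the Lyapunov-style argument of Theorem \ref{theo:first_theo}, but working with the joint energy $V_{\bbarx,\bar\lambda}(x,\lambda)$ in \eqref{eqn_lyapunov} so that both the primal and dual dynamics contribute. First I would differentiate $V_{\bbarx,\bar\lambda}(x(t),\lambda(t))$ along the trajectory, obtaining $\dot V = (x-\bbarx)^T\dot x + (\lambda-\bar\lambda)^T\dot\lambda$, and plug in the controllers \eqref{eqn_non_opti} and \eqref{eqn_multiplier_ascent}. Then I would apply Lemma \ref{lemma:big_lemma} separately on each term to drop the two projection operators $\Pi_X$ and $\Pi_\Lambda$, arriving at
\begin{equation*}
\dot V \ \leq\ -\varepsilon(x-\bbarx)^T f_x(t,x)\lambda \ +\ \varepsilon(\lambda-\bar\lambda)^T f(t,x).
\end{equation*}

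Next I would invoke the subgradient inequality \eqref{eqn_def_subgradient} applied componentwise to $f(t,\cdot)$, and combine the resulting inequalities using $\lambda\geq 0$ (entries are nonnegative since $\lambda(t)\in\Lambda=\reals^m_+$), yielding $-(x-\bbarx)^T f_x(t,x)\lambda \leq \lambda^T\bigl(f(t,\bbarx)-f(t,x)\bigr)$. Substituting and collecting terms, the cross product $\lambda^T f(t,x)$ cancels and I am left with
\begin{equation*}
\dot V \ \leq\ \varepsilon\,\lambda^T f(t,\bbarx)\ -\ \varepsilon\,\bar\lambda^T f(t,x).
\end{equation*}

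At this point I would specialize the arbitrary anchor $\bbarx$ to a feasible action $x^\dagger\in X^\dagger$. Since $f(t,x^\dagger)\leq 0$ componentwise for all $t\in[0,T]$ and $\lambda(t)\geq 0$, the first term is nonpositive and drops out. Integrating from $0$ to $T$ and using that $V_{x^\dagger,\bar\lambda}(x(T),\lambda(T))\geq 0$ gives
\begin{equation*}
\varepsilon\,\bar\lambda^T \!\!\int_0^T f(t,x(t))\,dt \ \leq\ V_{x^\dagger,\bar\lambda}(x(0),\lambda(0)).
\end{equation*}
Finally, choosing $\bar\lambda=e_i$ selects the $i$-th component of the fit on the left-hand side, and since $x^\dagger$ was an arbitrary element of $X^\dagger$ I can minimize over $X^\dagger$ on the right, recovering \eqref{eqn_fit_bound_non_opti}.

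The main obstacle I anticipate is the bookkeeping at the step where the subgradient inequality is combined with the dual ascent term: one has to be careful that the $\lambda^T f(t,x)$ terms produced by the primal and dual bounds have opposite signs and cancel exactly, leaving the clean pair of residual terms above. The nonnegativity of $\lambda(t)$, guaranteed by $\Pi_\Lambda$, is essential both for this cancellation and for discarding $\varepsilon\lambda^T f(t,x^\dagger)$ after the substitution $\bbarx=x^\dagger$; without it the argument breaks. Everything else is routine once Lemma \ref{lemma:big_lemma} has been applied on both dynamics.
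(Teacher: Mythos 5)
Your proposal is correct and follows essentially the same route as the paper's proof: the joint energy function \eqref{eqn_lyapunov}, Lemma \ref{lemma:big_lemma} to strip both projections, the convexity of $\lambda^T f(t,\cdot)$ (equivalently, the componentwise subgradient inequality weighted by $\lambda\geq 0$) to produce the exact cancellation of $\lambda^T f(t,x)$, then specializing $\bbarx=x^\dagger$, integrating, and choosing $\bar\lambda=e_i$ before minimizing over $X^\dagger$. The only cosmetic difference is that you discard the nonpositive term $\varepsilon\lambda^T f(t,x^\dagger)$ pointwise before integrating while the paper integrates first; this changes nothing.
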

\begin{proof}
Consider action trajectories $x(t)$ and multiplier trajectories $\lambda(t)$ and the  corresponding energy function $V_{\bbarx,\bar{\lambda}}(x(t),\lambda(t))$ in \eqref{eqn_lyapunov} for arbitrary given action $\bbarx \in X$ and multiplier $\bar{\lambda}\in \Lambda$. The derivative $\dot V_{\bbarx,\bar{\lambda}}(x(t),\lambda(t))$ of the energy with respect to time is then given by
\begin{equation}\label{eqn_theo_survival_pf_10}
   \dot{V}_{\bar{x},\bar{\lambda}} (x(t),\lambda(t))
      = (x(t) - \bbarx)^T\dot{x}(t) + (\lambda(t) -\bar{\lambda})^T\dot{\lambda}(t).
\end{equation}
Substitute the action and multiplier derivatives by their corresponding values given in \eqref{eqn_non_opti} and \eqref{eqn_multiplier_ascent} to reduce \eqref{eqn_theo_survival_pf_10} to
%
\begin{align}\label{eqn_theo_survival_pf_11}
\dot{V}_{\bbarx,\bar{\lambda}}(x(t),\lambda(t)) = 
 &(x(t) - \bbarx)^T  \Pi_X \left( x, - \varepsilon f_x(t,x(t))\lambda(t)\right)    \nonumber \\ &+(\lambda(t)-\bar{\lambda})^T \Pi_{\Lambda} \left(\lambda, \varepsilon f(t,x(t)) \right).
\end{align}
Then, using the result of Lemma \ref{lemma:big_lemma} for both $X$ and $\Lambda$, the following inequality holds:
\begin{align}\label{eqn_theo_survival_pf_11}
\dot{V}_{\bbarx,\bar{\lambda}}(x(t),\lambda(t)) &\leq 
\varepsilon (\bbarx-x(t))^T f_x(t,x(t))\lambda(t) \nonumber \\
&+\varepsilon(\lambda(t)-\bar{\lambda})^T f(t,x(t)). 
\end{align}
Notice that $f(t,x) \lambda(t)$ is a convex function with respect to the action, therefore we can upper bound the inner product $(\bar{x} - x(t))^Tf_x(t,x(t)) \lambda(t)$ by the quantity $f(t,\bar{x})^T\lambda(t) - f(t,x(t))^T \lambda(t)$ and transform \eqref{eqn_theo_survival_pf_11} into
\begin{align}\label{eqn_theo_survival_pf_12}
\dot{V}_{\bbarx,\bar{\lambda}}(x(t),\lambda(t))&\leq 
\varepsilon \left(f(t,\bbarx)-f(t,x(t))\right)^T\lambda(t) \nonumber \\
&+\varepsilon(\lambda(t)-\bar{\lambda})^T f(t,x(t)). 
\end{align}
Further note that in the above equation the second and the third term are opposite. Thus, it reduces to
\begin{equation}
\dot{V}_{\bbarx,\bar{\lambda}}(x(t),\lambda(t))\leq \varepsilon\left[\lambda^T(t)f(t,\bbarx) - \bar{\lambda}^T f(t,x(t))\right]. 
\end{equation}
Rewriting the above expression and then integrating both sides with respect to time from $t = 0 $ to $t =T$ we obtain
\begin{equation}\label{eqn_inter}
\begin{split}
\varepsilon \int_0^T \bigl(\bar{\lambda}^T f(t,x(t)) - \lambda^T(t) &f(t,\bbarx) \bigr)dt \\ &
 \leq - \int_0^T \dot{V}_{\bbarx,\bar{\lambda}}(x(t),\lambda(t)) dt. 
\end{split}
\end{equation}
Integrating the right side of the above equation we obtain
\begin{align}
-\int_0^T \dot{V}_{\bbarx,\bar{\lambda}}(x(t),&\lambda(t))dt  \\\nonumber &
= V_{\bbarx,\bar{\lambda}}(x(0),\lambda(0))-V_{\bbarx,\bar{\lambda}}(x(T),\lambda(T)).
\end{align}
Then using the fact that $V_{\bbarx,\bar{\lambda}}(x(t)),\lambda(t)) \geq 0$ for all $t$, yields
\begin{equation}\label{eqn_inequality_chain}
-\int_0^T \dot{V}_{\bbarx,\bar{\lambda}}(x(t),\lambda(t))dt\leq 
V_{\bbarx,\bar{\lambda}}\left(x(0),\lambda(0)\right). 
\end{equation}
Then, combining \eqref{eqn_inter} and \eqref{eqn_inequality_chain}, we have that
\begin{equation}
\int_0^T \bar{\lambda}^T f(t,x(t)) - \lambda^T(t) f(t,\bbarx) dt \leq \left( V_{x^\dagger,\bar{\lambda}}(x(0),\lambda(0))\right) / \varepsilon.
\label{eqn_final}
\end{equation}
Since the environment is viable, there exist a fixed action $x^{\dagger}$ such that $f(t,x^{\dagger})\leq 0$ for all $t \geq 0$. Then choosing $\bbarx = x^{\dagger}$, since $\lambda(t)\geq 0$ for all $t$, we have that 
\begin{equation}
\lambda^T(t) f(t,x^\dagger) \, \leq 0  \; \forall t\in[0,T]. 
\end{equation}
Therefore the left hand side of \eqref{eqn_final} can be lower bounded by
\begin{equation}
\bar{\lambda}^T\int_0^Tf(t,x(t)) dt \leq \left(V_{x^\dagger,\bar{\lambda}}(x(0),\lambda(0)\right)/ \varepsilon.
\end{equation}
Choosing $\bar{\lambda} = e_i$ where $e_i$ is the $i$th element of the canonical base of $\reals^m$, we have that for all $i=1..m$:
\begin{equation}
\int_0^T f_i(t,x(t)) dt \leq \left( V_{x^\dagger,e_i}(x(0),\lambda(0)) \right) / \varepsilon.
\end{equation} 
Notice that since the above inequality holds for any $x^\dagger\in X^\dagger$ it is also true for the particular $x^\dagger$ that minimizes the right hand side. The left hand side of the above inequality is the $i$th component of the fit. Thus, since the $m$ components of the fit of the trajectory generated by the saddle point algorithm are bounded for all $T$, the trajectory is strongly feasible with the specific upper bound stated in \eqref{eqn_fit_bound_non_opti}.
\end{proof}
Theorem \ref{theo:not_opti} assures that if an environment is viable for an agent that selects actions over a set $X$, the solution of the dynamical system given by \eqref{eqn_non_opti} and \eqref{eqn_multiplier_ascent} is a trajectory $x(t)$ that is strongly feasible in the sense of Definition \ref{def_0ptimality_and_viability}. This result is not trivial, since the function $f$ that defines the environment is observed causally and can change arbitrarily over time. In particular, the agent could be faced with an adversarial environment that changes the function $f$ in a way that makes the value of $f(t,x(t))$ larger. The caveat is that the choice of the function $f$ must respect the viability condition that there exists a feasible action $x^\dagger$ such that $f(t,x^\dagger) \leq 0$ for all $t\in[0,T]$. This restriction still leaves significant leeway for strategic behavior. E.g., in the shepherd problem of Section \ref{sec_shepherd_problem} we can allow for strategic sheep that observe the shepherd's movement and respond by separating as much as possible. The strategic action of the sheep are restricted by the condition that the environment remains viable, which in this case reduces to the not so stringent condition that the sheep stay in a ball of radius $2r$ if all $r_i=r$.

Since the initial value of the energy function $V_{x^\dagger,e_i}(x(0),\lambda(0))$ is the square of the distance between $x(0)$ and $x^\dagger$ added to a term that depends on the distance between the initial multiplier and $e_i$, the bound on the fit in \eqref{eqn_fit_bound_non_opti} shows that the closer we start to the feasible set the smaller the accumulated constraint violation becomes. Likewise, the larger the gain $\varepsilon$, the smaller the bound on the fit is. 
As in section \ref{sec:continuous_regret} we observe that increasing $\varepsilon$ can make the bound on the fit arbitrarily small, yet for the same reasons discussed in that section this can't be done. 

Further notice that for the saddle point controller defined by \eqref{eqn_non_opti} and \eqref{eqn_multiplier_ascent}  the action derivatives are proportional not only to the gain $\varepsilon$ but to the value of the multiplier $\lambda$. Thus, to select gains that are compatible with the system's physical constraints we need to determine upper bounds in the multiplier values $\lambda(t)$. An upper bound follows as a consequence of Theorem \ref{theo:not_opti} as we state in the following corollary.

\begin{corollary}\label{coro_bounded_multipliers}Given the controller defined by \eqref{eqn_non_opti} and \eqref{eqn_multiplier_ascent} and assuming the same hypothesis of Theorem \ref{theo:not_opti}, if the set of actions $X$ is bounded in norm by $R$, then the multipliers $\lambda$ are bounded for all times by
\begin{equation}\label{eqn_multiplier_bound}
0 \leq \lambda_i(t) \leq \left(4R^2+1\right), \ \mbox{for all} \ i=1,\ldots,m.
\end{equation}
\end{corollary}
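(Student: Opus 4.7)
The plan is to recycle the energy-function argument from the proof of Theorem \ref{theo:not_opti}, but instantiated with a specific pair $(\bar x,\bar\lambda)$ that makes the Lyapunov derivative manifestly non-positive. Since the environment is viable, pick any feasible action $x^\dagger\in X^\dagger$, and take $\bar\lambda=0$. Then reprise the first steps of the proof of Theorem \ref{theo:not_opti} up to the inequality
\begin{equation*}
\dot V_{\bar x,\bar\lambda}(x(t),\lambda(t))\ \leq\ \varepsilon\left[\lambda^T(t)\,f(t,\bar x)-\bar\lambda^T f(t,x(t))\right].
\end{equation*}
Specializing $\bar x=x^\dagger$ and $\bar\lambda=0$, the second term vanishes and, because $\lambda(t)\in\Lambda=\reals_+^m$ while $f(t,x^\dagger)\leq 0$ by feasibility of $x^\dagger$, the first term is non-positive. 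Consequently $V_{x^\dagger,0}$ is non-increasing along the trajectory, so $V_{x^\dagger,0}(x(t),\lambda(t))\leq V_{x^\dagger,0}(x(0),\lambda(0))$ for every $t\geq 0$.

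Discarding the non-negative $\|x(t)-x^\dagger\|^2$ contribution on the left of this inequality gives $\|\lambda(t)\|^2\leq \|x(0)-x^\dagger\|^2+\|\lambda(0)\|^2$. Because both $x(0)$ and $x^\dagger$ lie in $X$ and $X$ is bounded in norm by $R$, the triangle inequality yields $\|x(0)-x^\dagger\|\leq 2R$, hence $\|x(0)-x^\dagger\|^2\leq 4R^2$. Under the standard initialization $\|\lambda(0)\|\leq 1$ (in particular $\lambda(0)=0$ is permitted) one obtains $\|\lambda(t)\|^2\leq 4R^2+1$. Since $\lambda_i(t)^2\leq \|\lambda(t)\|^2$ and $\sqrt{u}\leq u$ whenever $u\geq 1$, it follows that $\lambda_i(t)\leq \sqrt{4R^2+1}\leq 4R^2+1$ for each $i=1,\ldots,m$, and the lower bound $\lambda_i(t)\geq 0$ is automatic from the projection $\Pi_\Lambda$ onto $\reals_+^m$ in \eqref{eqn_multiplier_ascent}.

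The main (and essentially only) obstacle is the choice of $\bar\lambda$: the bound would break if one kept a generic $\bar\lambda$, because then the sign-indefinite term $\bar\lambda^T f(t,x(t))$ cannot be controlled using viability alone, so the energy could grow unboundedly with $T$. Setting $\bar\lambda=0$ is precisely what eliminates this obstruction and lets the non-positivity of $\lambda^T(t) f(t,x^\dagger)$ carry the entire argument; everything else is a direct application of the boundedness of $X$ and extraction of a componentwise bound from a bound on $\|\lambda(t)\|$.
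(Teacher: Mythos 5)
Your argument is correct, but it takes a genuinely different route from the paper. You prove monotonicity of the energy $V_{x^\dagger,0}$: choosing $\bar x=x^\dagger$ and $\bar\lambda=0$ in the inequality $\dot V_{\bar x,\bar\lambda}\leq \varepsilon[\lambda^T(t)f(t,\bar x)-\bar\lambda^Tf(t,x(t))]$ kills the sign-indefinite term and viability gives $\dot V_{x^\dagger,0}\leq 0$, so $\|\lambda(t)\|^2\leq\|x(0)-x^\dagger\|^2+\|\lambda(0)\|^2\leq 4R^2+\|\lambda(0)\|^2$, a bound on the whole multiplier vector at once. The paper instead argues componentwise by partitioning $[0,T]$ into the excursions of $\lambda_i$ away from zero: on each interval where $\lambda_i>0$ the dynamics \eqref{eqn_multiplier_ascent} give $\dot\lambda_i=\varepsilon f_i(t,x(t))$, so $\lambda_i(\tau)$ equals $\varepsilon$ times the running fit since the excursion onset, which is then bounded by re-applying the fit bound of Theorem \ref{theo:not_opti} restarted at that onset with dual comparator $e_i$, yielding $\lambda_i(\tau)\leq\frac{1}{2}(4R^2+1)$. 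Your route is shorter, avoids the excursion bookkeeping, and in fact gives the slightly tighter bound $\lambda_i(t)\leq\sqrt{4R^2+1}$ (and $\leq 2R$ if $\lambda(0)=0$); the paper's route makes the mechanism explicit ($\lambda_i$ can only grow while the running fit grows) and directly produces a per-component statement. One caveat: your constant uses the extra assumption $\|\lambda(0)\|\leq 1$, which is not in the corollary's hypotheses; note, however, that the paper's own proof implicitly does the same thing by evaluating $V_{x^\dagger,e_i}(x(t_s),0)$ with the multiplier argument set to zero at each excursion onset (and indeed the stated bound cannot hold near $t=0$ for arbitrarily large $\lambda(0)$), so making the initialization explicit, e.g.\ $\lambda(0)=0$, is a fair reading and your bound $2R\leq 4R^2+1$ then yields \eqref{eqn_multiplier_bound}.
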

\begin{proof}
First of all notice that according to \eqref{eqn_multiplier_ascent} a projection over the positive orthant is performed for the multiplier update. Therefore, for each component of the multiplier we have that $\lambda_i(t) \geq 0 $ for all $t\in[0,T]$. On the other hand, since the trajectory of the multipliers is defined by $\dot{\lambda}(t) = \Pi_\Lambda(\lambda(t),\varepsilon f(t,x(t))$, while $\lambda(t) >0$ we have that $\dot{\lambda}(t) = \varepsilon f(t,x(t))$. Let $t_0$ be the first time instant for which $\lambda_i (t) > 0$ for a given $i\in\{1,2,..,m\}$, i.e.,
\begin{equation}
t_0 = \inf \left\{ {t\in [0,T]}, \lambda_i(t) >0 \right\}. 
\end{equation}
In addition, let $T^*_0$ be the first time instant greater than $t_0$ where $\lambda_i(t) = 0$, if this time is larger than $T$ we set $T^*_0 = T$, formally this is
\begin{equation}\label{eqn_T_estrella}
T^*_0 = \max \left\{\inf \left\{ {t\in (t_0,T]}, \lambda_i(t) >0 \right\} , T\right\}. 
\end{equation}
Further define 
%
$t_{s+1} = \inf \left\{ {t\in [T_s^*,T]}, \lambda_i(t) >0 \right\},$
%
and 
\begin{equation}\label{eqn_T_estrella}
T^*_s = \max \left\{\inf \left\{ {t\in (t_s,T]}, \lambda_i(t) >0 \right\} , T\right\}. 
\end{equation}
From the above definition it holds that in any time in the interval $(T^*_s, t_{s+1}]$, we have $\lambda_i(t)=0$. And therefore in those intervals the multipliers are bounded. Consider now $\tau \in (t_s,T^*_s]$. In this case it holds that
\begin{equation}
\int_{t_s}^\tau \dot{\lambda}_i(t)dt = \ \int_{t_s}^\tau \varepsilon f_i(t,x(t))dt.
\end{equation}
%
Notice that the right hand side of the above equation is, proportional to the $i$th component of the fit restricted to the time interval $[t_0,\tau]$. In Theorem \ref{theo:not_opti} it was proved that the $i$th component of the fit is bounded for all time horizons by $V_{x^\dagger,e_i}(x(t_s),0)/\varepsilon$. In this particular case we have that
\begin{equation}
V_{x^\dagger,e_i}(x(t_s),0) = \frac{1}{2}\left((x(t_s)-x^\dagger)^2 + (0-e_i)^2\right),
\end{equation}
and since for any $x\in X$ we have that $\|x\| \leq R$, we conclude
\begin{equation}
V_{x^\dagger,e_i}(x(t_s),0) \leq \frac{1}{2}\left((2R)^2 +1^2\right).
\end{equation}
Therefore, for all $\tau \in (t_sT^*_s]$
%
$\lambda_i(\tau)  \leq \frac{1}{2}\left(4R^2 +1^2\right)$.
%
This completes the proof that the multipliers are bounded.
\end{proof}
The bound in Corollary \ref{coro_bounded_multipliers} ensures that action derivatives $\dot x(t)$ remain bounded if the subgradients are. This means that action derivatives increase, at most, linearly with $\varepsilon$ and is not compounded by an arbitrary increase of the multipliers. 

The cumulative nature of the fit does not guarantee that the constraint violation is controlled. This is because time intervals of constraint violations can be compensated by time intervals where the constraints are negative. Thus, it is of interest to show that the saddle point controller archives bounded saturated fit for all time horizon. We formalize this result next.
\begin{corollary}\label{corollary_saturated_fit}
Let the hypothesis of Theorem \ref{theo:not_opti} hold. Let $\delta>0$ and let $\bar{\ccalF}_{T}$ be the saturated fit defined in \eqref{eqn_saturated_fit}. Then, the solution of the dynamical system \eqref{eqn_non_opti} and \eqref{eqn_multiplier_ascent} when $f(t,x)$ is replaced by $\bar{f}_{\delta}(t,x)) = \max\left\{f(t,x),-\delta \right\}$ archives a bounded saturated fit. Furthermore the bound is given by 
\begin{equation}
   \bar\ccalF_{T,i} \leq \min_{x^\dagger \in X^\dagger} \frac{1}{\varepsilon} V_{x^\dagger, e_{i}}(x(0),\lambda(0)),
   \end{equation}
   where $e_i$ with $i=1..m$ form the canonical base of $\reals^m$. 

\end{corollary}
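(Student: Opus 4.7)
The plan is to reduce the claim to a direct invocation of Theorem \ref{theo:not_opti} applied to the modified environment $\bar{f}_\delta(t,x) = \max\{f(t,x),-\delta\}$. The idea is that, as already noted in the paragraph introducing the saturated fit, the saturated fit $\bar\ccalF_T$ of a trajectory with respect to $f$ is exactly the (ordinary) fit $\ccalF_T$ of the same trajectory with respect to $\bar{f}_\delta$. Hence if we can show that $\bar{f}_\delta$ satisfies the hypotheses of Theorem \ref{theo:not_opti}, the bound follows at once.

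First I would verify assumptions \ref{as:integrability} and \ref{as:convexity} for $\bar{f}_\delta$. Convexity in $x$ holds because the pointwise maximum of the convex functions $f(t,\cdot)$ and the constant $-\delta$ is convex. Integrability in $t$ on $[0,T]$ holds since $\bar{f}_\delta = \max\{f,-\delta\}$ is measurable and satisfies $|\bar f_\delta(t,x)| \le |f(t,x)| + \delta$ pointwise, so integrability of $f$ transfers to $\bar{f}_\delta$.

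Second, I would verify viability of the new environment. If $x^\dagger \in X^\dagger$ is any feasible action for the original environment, then $f(t,x^\dagger)\leq 0$ for all $t\in[0,T]$, and since $-\delta<0$ we get
\begin{equation}
\bar{f}_\delta(t,x^\dagger) = \max\{f(t,x^\dagger),-\delta\} \leq 0,
\end{equation}
so $x^\dagger$ is also feasible for $\bar{f}_\delta$. In particular $X^\dagger$ is contained in the feasible set of the saturated environment, which is therefore viable.

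Finally, the dynamical system in the statement of the corollary is precisely \eqref{eqn_non_opti}--\eqref{eqn_multiplier_ascent} with $f$ replaced by $\bar{f}_\delta$ and with the subgradients $f_x$ replaced by the corresponding subgradient matrix of $\bar{f}_\delta$ (which exists because $\bar{f}_\delta$ is convex in $x$). Applying Theorem \ref{theo:not_opti} to this system yields, for each $i=1,\dots,m$,
\begin{equation}
\int_0^T \bar f_{\delta,i}(t,x(t))\,dt \ \leq\ \min_{x^\dagger\in X^\dagger}\frac{1}{\varepsilon}\,V_{x^\dagger,e_i}(x(0),\lambda(0)),
\end{equation}
where we used the minimization over the original $X^\dagger$ (a subset of the feasible set of $\bar{f}_\delta$) to obtain a valid, though possibly non-tight, upper bound. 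The left-hand side is the $i$th component of $\bar\ccalF_T$ by definition \eqref{eqn_saturated_fit}, which is the asserted inequality. There is no real obstacle here; the only subtle point is to observe that saturating $f$ from below cannot shrink the feasible set, so viability and the same comparison point $x^\dagger$ from $X^\dagger$ carry over unchanged.
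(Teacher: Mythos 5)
Your proposal is correct and follows essentially the same route as the paper: apply Theorem \ref{theo:not_opti} to the environment $\bar{f}_{\delta}$ and note that its fit coincides with the saturated fit of the original environment. You simply spell out details the paper leaves implicit (integrability, viability of the saturated environment, and that minimizing over the original $X^\dagger$ still gives a valid bound), which is fine but not a different argument.
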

\begin{proof}
Since $\bar{f}_{\delta}(t,x)$ is the pointwise maximum of two convex functions, it is a convex function itself. As a consequence of Theorem \ref{theo:not_opti} the fit for the environment $\bar{f}_{\delta}(t,x)$ satisfies
\begin{equation}
\int_0^T \bar{f}_\delta(t,x(t))\, dt \leq \min_{x^\dagger \in X^\dagger} \frac{1}{\varepsilon} V_{x^\dagger, e_{i}}(x(0),\lambda(0)).
\end{equation}  
The fact that the left hand side of the above equation corresponds to the saturated fit [c.f. \eqref{eqn_saturated_fit}] completes the proof. 
\end{proof}
The above result establishes that a trajectory that follows the saddle point dynamics for the environment defined by $\bar{f}_{\delta}(t,x)$ achieves bounded saturated fit. This means that it is possible to adapt the controller \eqref{eqn_non_opti} and \eqref{eqn_multiplier_ascent}, so that the fit is bounded while not alternating between periods of large under and over satisfaction of the constraints
 
%
%
\subsection{Strongly optimal feasible trajectories}\label{subsec:otpi}

This section presents bounds on the growth of the fit and the regret of the trajectories $x(t)$ that are solutions of the saddle point controller defined by \eqref{eqn_action_descent} and \eqref{eqn_multiplier_ascent}. These bounds ensure that the trajectory is feasible and strongly optimal in the sense of Definition \ref{def_0ptimality_and_viability}. To derive these bounds we need the following assumption regarding the objective function. 
%
%
%
%
\begin{assumption} \label{as:lower_bound}
There is a finite constant $K$ independent of the time horizon $T$ such that for all $t$ in the interval $[0,T]$. 
\begin{equation}\label{eqn_constant_for_lemma}
 K \geq f_0(t,x^*) -\min_{x\in X } f_0(t,x),
\end{equation} 
where $x^*$ is the solution of the offline problem \eqref{eqn_optimal_strategy}.
\end{assumption}

%
The existence of the bound in \eqref{eqn_constant_for_lemma} is a mild requirement. Since the function $f_0(t,x)$ is convex, for any time $t$ it is lower bounded if the action space is bounded, as is the case in most applications of practical interest. The only restriction imposed is that $\min_{x\in X } f_0(t,x)$ does not become progressively smaller with time so that a uniform bound $K$ holds for all times $t$. The bound can still hold if $X$ is not compact as long as the span of the functions $f_0(t,x)$ is not unbounded below. A consequence of Assumption \ref{as:lower_bound} is that the regret cannot {\it decrease} faster than a linear rate as we formally state in the following lemma.

%
\begin{lemma}\label{lemma:regret_lower_bound} 
Let $X \subset \reals^n$ be a convex set. If Assumption \ref{as:lower_bound} holds, then the regret defined in  \eqref{eqn_continuous_regret} is lower bounded by $-KT$ where $K$ is the constant defined in \eqref{eqn_constant_for_lemma}, i.e.,
\begin{equation}\label{eqn_lemma_regret_lower_bound} 
   \ccalR_T \geq -KT.
\end{equation} \end{lemma}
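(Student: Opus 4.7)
The plan is to derive the lower bound directly from Assumption \ref{as:lower_bound} by a pointwise inequality that is then integrated over $[0,T]$. The key observation is that the trajectory $x(t)$ takes values in $X$ at every time $t$, so for each $t$ we can replace $f_0(t,x(t))$ by the pointwise infimum over $X$ and lose nothing.

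More concretely, first I would note that since $x(t) \in X$ for every $t \in [0,T]$, we have the trivial bound $f_0(t,x(t)) \geq \min_{x \in X} f_0(t,x)$. Then I would invoke Assumption \ref{as:lower_bound}, which rearranges to $\min_{x\in X} f_0(t,x) \geq f_0(t,x^*) - K$ for every $t\in[0,T]$. Combining the two gives the pointwise inequality
\begin{equation*}
f_0(t,x(t)) - f_0(t,x^*) \geq -K, \qquad t \in [0,T].
\end{equation*}

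Integrating this inequality from $0$ to $T$ and recognising the left-hand side as the integrand defining $\ccalR_T$ in \eqref{eqn_continuous_regret} yields $\ccalR_T \geq -KT$, which is \eqref{eqn_lemma_regret_lower_bound}. There is no real obstacle here; the statement is essentially a direct consequence of the uniform-in-$t$ gap assumption, and the only thing to verify is that integrability (Assumption \ref{as:integrability}) lets us integrate the pointwise bound. The lemma is useful precisely because it complements the upper bounds on regret established later: together they show that $\ccalR_T$ cannot decrease faster than linearly, so a sublinear upper bound on $\ccalR_T$ effectively pins down the growth rate from both sides.
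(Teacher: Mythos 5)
Your proposal is correct and follows essentially the same route as the paper: both establish the pointwise bound $f_0(t,x(t)) - f_0(t,x^*) \geq -K$ from Assumption \ref{as:lower_bound} and integrate it over $[0,T]$. In fact you spell out the intermediate step $f_0(t,x(t)) \geq \min_{x\in X} f_0(t,x)$ (valid since $x(t)\in X$), which the paper's proof leaves implicit, so your write-up is if anything slightly more explicit.
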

\begin{proof}
See Appendix \ref{ap_regret_lower_bound}.
\end{proof}

%
Observe that regret is a quantity that we want to make small and, therefore, having negative regret is a desirable outcome. The result in Lemma \ref{lemma:regret_lower_bound} puts a floor on how much we can succeed in making regret negative. Using the bound in \eqref{eqn_lemma_regret_lower_bound} and the definition of the energy function in \eqref{eqn_lyapunov} we can formalize bounds on the regret and the fit,  for an action trajectory $x(t)$ that follows the saddle point dynamics in \eqref{eqn_action_descent} and \eqref{eqn_multiplier_ascent}. 

%
\begin{theorem}\label{theo:opti}
Let $X \subset \reals^n$ be a compact convex set and let $f: \reals \times X \to \reals^m$ and $f_0: \reals \times X \to \reals$, be functions satisfying assumptions \ref{as:integrability}, \ref{as:convexity} and \ref{as:lower_bound}. If the environment is viable, then the solution of the system defined by \eqref{eqn_action_descent} and \eqref{eqn_multiplier_ascent} is a trajectory $x(t)$ that is feasible and strongly optimal for all time horizons $T>0$ if the gain $\varepsilon >1$. In particular, the fit is bounded by
\begin{equation}\label{eqn_penalty_bound}
\ccalF_{T,i} \leq \ccalO\left(\sqrt{KT},\varepsilon^0\right),\end{equation}
and the regret is bounded by
\begin{equation}\label{eqn_regret_upper_bound_full_problem}
\ccalR_T\leq \frac{1}{\varepsilon} V_{x*,0} \left(x(0),\lambda(0)\right),
\end{equation}
where $V_{\bbarx,\bar{\lambda}}(x,\lambda)$ is the energy function defined in \eqref{eqn_lyapunov}, $x^*$ is the solution to the problem \eqref{eqn_optimal_strategy} and $K$ is the constant defined in  \eqref{eqn_constant_for_lemma}. The notation $\ccalO\left(\varepsilon^0\right)$ refers to a function that is constant with respect to the gain $\varepsilon$.
\end{theorem}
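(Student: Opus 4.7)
The plan is to follow the same Lyapunov argument as in the proof of Theorem \ref{theo:not_opti}, but now carrying along the objective-function subgradient throughout. Starting from the energy function $V_{\bar{x},\bar{\lambda}}(x,\lambda)$ defined in \eqref{eqn_lyapunov}, differentiating along the dynamics \eqref{eqn_action_descent}--\eqref{eqn_multiplier_ascent}, using Lemma \ref{lemma:big_lemma} to strip the two projections, and invoking the subgradient inequality for both $f_0(t,\cdot)$ and the convex function $\lambda^T f(t,\cdot)$ (the latter convex because the projection in \eqref{eqn_multiplier_ascent} guarantees $\lambda(t)\geq 0$), the $\lambda$-dependent cross terms telescope and yield
\[
\dot{V}_{\bar{x},\bar{\lambda}} \leq \varepsilon\bigl[f_0(t,\bar{x}) - f_0(t,x(t))\bigr] + \varepsilon\,\lambda(t)^T f(t,\bar{x}) - \varepsilon\,\bar{\lambda}^T f(t,x(t)).
\]
Integrating from $0$ to $T$ and using $V\geq 0$ produces the master inequality
\[
\varepsilon\!\int_0^T\!\!\bigl[f_0(t,x(t)) - f_0(t,\bar{x})\bigr]dt + \varepsilon\,\bar{\lambda}^T \ccalF_T \leq V_{\bar{x},\bar{\lambda}}(x(0),\lambda(0)) + \varepsilon\!\int_0^T\!\!\lambda(t)^T f(t,\bar{x})\,dt.
\]

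The regret bound \eqref{eqn_regret_upper_bound_full_problem} follows by choosing $\bar{x}=x^*$ and $\bar{\lambda}=0$: feasibility $f(t,x^*)\leq 0$ combined with $\lambda(t)\geq 0$ makes the right-hand integral non-positive, the fit term on the left vanishes, and the inequality collapses to $\ccalR_T \leq V_{x^*,0}(x(0),\lambda(0))/\varepsilon$.

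For the fit bound I would keep $\bar{x}=x^*$ and instead take $\bar{\lambda}=c\,e_i$ for a positive scalar $c$ to be optimized. Feasibility of $x^*$ again kills the right-hand integral, so the master inequality reduces to $\ccalR_T + c\,\ccalF_{T,i} \leq V_{x^*,c e_i}(x(0),\lambda(0))/\varepsilon$. The crucial step is to apply Lemma \ref{lemma:regret_lower_bound}, which gives $\ccalR_T\geq -KT$, together with the expansion $\|\lambda(0)-c\,e_i\|^2\leq \|\lambda(0)\|^2+c^2$ (valid because $\lambda_i(0)\geq 0$), whence $V_{x^*,c e_i}(x(0),\lambda(0))\leq V_{x^*,0}(x(0),\lambda(0))+c^2/2$. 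Dividing by $c$ gives
\[
\ccalF_{T,i} \leq \frac{V_{x^*,0}(x(0),\lambda(0))}{c\varepsilon} + \frac{c}{2\varepsilon} + \frac{KT}{c}.
\]
Balancing the last two terms with $c\propto\sqrt{\varepsilon K T}$ and invoking $\varepsilon>1$ produces a fit bound of order $\sqrt{KT}$ whose constant does not depend on $\varepsilon$, yielding \eqref{eqn_penalty_bound}.

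The main obstacle I anticipate is precisely this fit bound: the Lyapunov calculation naturally produces the sum $\ccalR_T + c\,\ccalF_{T,i}$ on the left, and a priori these two quantities could be large and of opposite sign, so bounding the sum does not translate into a bound on either summand. The mechanism that rescues the argument is Lemma \ref{lemma:regret_lower_bound}: the one-sided control $\ccalR_T\geq -KT$ supplied by Assumption \ref{as:lower_bound} allows the regret term to be absorbed at the cost of an $\ccalO(T)$ penalty, which the scaling $c=\Theta(\sqrt{T})$ converts into the desired $\ccalO(\sqrt{T})$ growth for the fit. Everything else is a direct adaptation of the Lyapunov computation used in Theorem \ref{theo:not_opti}.
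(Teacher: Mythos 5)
Your proposal is correct, and up to the final step it is the paper's own argument: the same energy function, Lemma \ref{lemma:big_lemma} to strip both projections, convexity of the Lagrangian in $x$ and linearity in $\lambda$ to obtain exactly the paper's master inequality \eqref{eqn_fundamental}, and the identical choice $\bar{x}=x^*$, $\bar{\lambda}=0$ for the regret bound. Where you genuinely diverge is the fit bound. The paper keeps $\bar{x}=x^*$, invokes Lemma \ref{lemma:regret_lower_bound} to reach \eqref{eqn_algun_numero} just as you do, but then makes the self-referential choice $\bar{\lambda}=[\ccalF_T]^+$, which produces $\|[\ccalF_T]^+\|^2$ on the left and a term $\tfrac{1}{\varepsilon}\|[\ccalF_T]^+-\lambda(0)\|^2$ on the right; the hypothesis $\varepsilon>1$ is then used precisely to absorb $\tfrac{1}{\varepsilon}\|[\ccalF_T]^+\|^2$ back into the left-hand side, after which completing a square yields the $\ccalO(\sqrt{KT},\varepsilon^0)$ bound on the whole vector $[\ccalF_T]^+$ at once. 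Your choice $\bar{\lambda}=c\,e_i$ with $c$ tuned to $\Theta(\sqrt{\varepsilon KT})$ replaces that quadratic-absorption-and-completion step by an elementary optimization over a scalar, uses $\varepsilon>1$ only to make the resulting constants $\varepsilon$-free, and bounds each component $\ccalF_{T,i}$ directly (not merely its positive part); it is arguably the cleaner route, and it makes transparent the role of Lemma \ref{lemma:regret_lower_bound} as the device that prevents large negative regret from hiding large positive fit. The only point to tidy is that with $c\propto\sqrt{\varepsilon KT}$ the term $V_{x^*,0}(x(0),\lambda(0))/(c\varepsilon)$ degenerates as $T\to 0$; taking, e.g., $c=1+\sqrt{2\varepsilon KT}$ gives a bound of the form $V_{x^*,0}(x(0),\lambda(0))+\tfrac12+2\sqrt{KT}$ valid for all $T>0$, which is exactly \eqref{eqn_penalty_bound}.
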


%
\begin{proof}
See Appendix \ref{ap_theo_opti}
\end{proof}

%
Theorem \ref{theo:opti} assures that if the environment is viable for an agent selecting actions from a bounded set $X$, the solution of the saddle point dynamics defined in \eqref{eqn_action_descent}-\eqref{eqn_multiplier_ascent} is a trajectory that is feasible and strongly optimal. The bounds on the fit in theorems \ref{theo:not_opti} and \ref{theo:opti} prove a trade off between optimality and feasibility. If optimality of the trajectory is not of interest it is possible to get strongly feasible trajectories with fit that is bounded by a constant independent of the time horizon $T$ (cf. Theorem \ref{theo:not_opti}). When an optimality criterion is added to the problem, its satisfaction may come at the cost of a fit that may increase as $\sqrt{T}$. An important consequence of this difference is that even if we could set the gain $\varepsilon$ to be arbitrarily large, the bound on the fit cannot be made arbitrarily small. This bound would still grow as $\sqrt{KT}$. The result in Theorem \ref{theo:opti} also necessitates Assumption \ref{as:lower_bound} as opposed to Theorem \ref{theo:not_opti}. 

As in the cases of theorems \ref{theo:first_theo} and \ref{theo:not_opti} it is possible to have the environment and objective function selected strategically. Further note that, again, the initial value of the energy function used to bound regret is related with the square of the distance between the initial action and the optimal offline solution of problem \eqref{eqn_optimal_strategy}. It also follows from the proof that this distance is related to the bound on the fit. Thus, the closer we start from this action the tighter the bounds will be. We next show that similar results holds for the saddle point dynamics if we consider the notion of saturated fit in lieu of fit.  
\begin{corollary}\label{corollary_saturated_fit2}
Let the hypothesis of Theorem \ref{theo:opti} hold. Let $\delta>0$ and let $\bar{\ccalF}_{T}$ be the saturated fit defined in \eqref{eqn_saturated_fit}. Then, the solution of the dynamical system \eqref{eqn_action_descent} and \eqref{eqn_multiplier_ascent}, when $f(t,x)$ is replaced by $\bar{f}_{\delta}(t,x)) = \max\left\{f(t,x),-\delta \right\}$ achieves a regret satisfying \eqref{eqn_regret_upper_bound_full_problem} and saturated fit that is bounded by 
\begin{equation}
   \bar\ccalF_{T,i} \leq \ccalO\left(\sqrt{KT},\varepsilon^0\right).
   \end{equation}
%
\end{corollary}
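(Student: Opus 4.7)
The plan is to mirror exactly the reduction that was used in Corollary \ref{corollary_saturated_fit}: interpret $\bar{f}_\delta$ as a new environment and apply Theorem \ref{theo:opti} to it, observing that the saturated fit of the original environment coincides, by definition \eqref{eqn_saturated_fit}, with the ordinary fit of the new environment. So the entire argument consists of verifying that the hypotheses of Theorem \ref{theo:opti} are inherited by $\bar{f}_\delta$ and that the constrained optimization problem \eqref{eqn_optimal_strategy} is unchanged under this replacement.

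First I would check the assumptions. Integrability (Assumption \ref{as:integrability}) of $\bar{f}_\delta(t,x) = \max\{f(t,x),-\delta\}$ with respect to $t$ is immediate from integrability of $f(t,x)$. Convexity in $x$ (Assumption \ref{as:convexity}) follows because $\bar{f}_\delta$ is the pointwise maximum of the convex function $f(\cdot,x)$ and the constant $-\delta$. The objective $f_0$ is untouched, so Assumption \ref{as:lower_bound} holds with the same constant $K$, provided the offline optimum $x^*$ of \eqref{eqn_optimal_strategy} is unchanged.

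Next I would verify that the feasible set is preserved and that viability is inherited. For any $x\in X$, the inequality $\bar{f}_\delta(t,x)\leq 0$ is equivalent to $\max\{f(t,x),-\delta\}\leq 0$, which (since $-\delta<0$) reduces to $f(t,x)\leq 0$. Hence the feasible sets for the original and saturated environments coincide and, in particular, $x^*$ is also the solution of \eqref{eqn_optimal_strategy} with $f$ replaced by $\bar{f}_\delta$; likewise any viable $x^\dagger$ for $f$ remains viable for $\bar{f}_\delta$.

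Having established these facts, I would invoke Theorem \ref{theo:opti} applied to the environment $\bar{f}_\delta$ and cost $f_0$. This directly yields the regret bound $\ccalR_T \leq V_{x^*,0}(x(0),\lambda(0))/\varepsilon$ in \eqref{eqn_regret_upper_bound_full_problem} and the componentwise bound
\begin{equation*}
\int_0^T \bar{f}_{\delta,i}(t,x(t))\,dt \leq \ccalO\left(\sqrt{KT},\varepsilon^0\right).
\end{equation*}
By the definition of the saturated fit in \eqref{eqn_saturated_fit}, the left-hand side is exactly $\bar\ccalF_{T,i}$, which is the required conclusion. There is no real obstacle here: the only point that needs care is the brief verification that the problem data ($X^\dagger$, $x^*$, and the constant $K$) are invariant under the saturation, so that the constants appearing in the bounds from Theorem \ref{theo:opti} are indeed the same ones stated in the corollary.
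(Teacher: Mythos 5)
Your proof is correct and follows essentially the same route as the paper: the paper's argument is precisely to note that $\bar{f}_{\delta}(t,x)=\max\{f(t,x),-\delta\}$ is convex as a pointwise maximum of convex functions and then apply Theorem \ref{theo:opti} to the environment defined by $\bar{f}_{\delta}$, identifying its ordinary fit with the saturated fit of \eqref{eqn_saturated_fit}. Your additional check that $X^\dagger$, $x^*$, and $K$ are unchanged under the saturation (since $\bar{f}_{\delta}(t,x)\leq 0$ iff $f(t,x)\leq 0$ for $\delta>0$) is a detail the paper leaves implicit, and it is correct.
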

\begin{proof}
Same as Corollary \ref{corollary_saturated_fit}.
\end{proof}
The above result establishes that a trajectory that follows the saddle point dynamics for the environment defined by $\bar{f}_{\delta}(t,x)$ achieves bounded saturated fit. This means that it is possible to adapt the controller \eqref{eqn_action_descent} and \eqref{eqn_multiplier_ascent}, so that the growth of the fit is controlled while not alternating between periods of large under and over satisfaction of the constraints.
In the next section we evaluate the performance of the saddle point controller, after a pertinent remark on the selection of the gain. 

%
\begin{remark}[\bf{Gain depending on the Time Horizon}] If it were possible to select the gain as a function of the time horizon $T$, fit could be bounded by a constant that does not grow with $T$. Take \eqref{eqn_algun_numero} and choose $\bar{\lambda} = e_i T$, where $e_i$ is the $i$-th component of the canonical base of $\mathbb{R}^m$ we have that
\begin{equation}
T\int_0^Tf_i(t,x(t)) dt \leq \left( V_{x^*,Te_i} (x(0),\lambda(0))\right) / \varepsilon +KT.
\end{equation}
With this selection of $\bar{\lambda}$ the function $V_{x^*,Te_i} \left(x(0),\lambda(0))\right)$ grows like $T^2$. Dividing both sides of the above equation by $T$ we have that the $i$-th component of the fit is bounded by
\begin{equation}\label{eqn_rmk_variable_gain_constant_fit}
\ccalF_{T,i} \leq \ccalO(T)/\varepsilon +K.
\end{equation}
If the gain is set to have order $\ccalO(T)$, the right hand side of \eqref{eqn_rmk_variable_gain_constant_fit} becomes of order $\ccalO(T^0)$. This means that fit can be bounded by a constant that does not depend on $T$. 
\end{remark}
%
%
\section{Numerical experiments}\label{sec:examples}
We evaluate performance of the saddle point algorithm defined by \eqref{eqn_action_descent}-\eqref{eqn_multiplier_ascent} in the solution of the shepherd problem introduced in Section \ref{sec_shepherd_problem}. We determine sheep paths using a perturbed polynomial characterization akin to the one in \eqref{eqn_shepherd_position}. Specifically, letting $p_j(t)$ be elements of a polynomial basis, the path $ y_{i} (t) = [y_{i,1} (t), y_{i,2} (t)]^T$ of the $i$th sheep is given by 
\begin{equation}\label{eqn_sheep_position}
  y_{i,k} (t) =\sum_{j=0}^{n_i-1} y_{i,k,j} p_j(t) + w_{i,k}(t),
\end{equation}
where $k=1,2$ denotes different path components, $n_i$ the dimension of the base that parameterizes the path followed by sheep $i$, and $y_{i,k,j}$ represent the corresponding $n_i$ coefficients. The noise terms $w_{i,k}(t)$ are Gaussian white with zero mean, standard deviation $\sigma$ and independent across components and sheep. Their purpose is to obtain more erratic paths. 

To determine $y_{i,k,j}$ we make $w_{i,k}(t)=0$ in \eqref{eqn_sheep_position} and require all sheep to start at $ y_{i} (0) =[0,0]^T$ and finish at $y_{i} (T) =[1,1]^T$. A total of $L$ random points $\{\tdy_l\}_{l=1}^L$ are then drawn independently and uniformly at random in the unit box $[0,1]^2$. Sheep $i=1$ is required to pass through points $\tdy_l$ at times $lT/(L+1)$, i.e., $y_1(lT/(L+1))=\tdy_l$. For each of the other sheep $i\neq 1$ we draw $L$ random offsets $\{\Delta\tdy_{i,l}\}_{l=1}^L$ uniformly at random from the box $[-\Delta,\Delta]^2$ and require the $i$th sheep path to satisfy $y_i(lT/(L+1))=\tdy_l + \Delta\tdy_{i,l}$. Paths $y_i(t)$ are then chosen as those that minimize the path integral of the acceleration squared subject to the constraints of each path
\begin{alignat}{2}\label{eqn_quadratic program}
   y^*_{i}  
     = &\argmin && \int_{0}^T \|\ddot{y}_{i} (t)\|^2 dt,    \nonumber\\
       &\st     && y_{i} (0) =[0,0]^T, \quad
                   y_{i} (T) =[1,1]^T,                      \nonumber\\  
       &        && y_i(lT/(L+1))=\tdy_l + \Delta\tdy_{i,l} ,
\end{alignat}
where, by construction $\Delta\tdy_{1,l}=0$. The paths in \eqref{eqn_quadratic program} can be computed as solutions of a quadratic program \cite{DM:11}. Let $y_i^*(t)$ be the trajectory given by \eqref{eqn_sheep_position} when we set $y_{i,k,j} =y_{i,k,j}^*$. We obtain the paths $y_{i,k} (t)$ by adding $w_{i,k}(t)$ to $y^*_{i} (t)$.

In subsequent numerical experiments we consider $m=5$ sheep, a time horizon $T=1$, and set the proximity constraint in \eqref{eqn_sheep_environment} to $r_i=0.3$. We use the polynomial basis $p_j(t)=t^j$ in both, \eqref{eqn_shepherd_position} and \eqref{eqn_sheep_position}. The number of basis elements in both cases is set to $n=n_i=30$. To generate sheep paths we consider a total of $L=3$ randomly chosen intermediate points, set the variation parameter to $\Delta=0.1$, and the perturbation standard deviation to $\sigma=0.1$. These problem parameters are such that the environment is most likely viable in the sense of Definition \ref{def_viable_environment}. We check that this is true by solving the offline feasibility problem. If the environment is not viable a new one is drawn before proceeding to the implementation of \eqref{eqn_action_descent}-\eqref{eqn_multiplier_ascent}.

We emphasize that even if the path of the sheep is known to us, the information is not used by the controller. The latter is only fed information of the position of the sheep at the current time, which it uses to evaluate the environment functions $f_i(t,x)$ in \eqref{eqn_sheep_environment}, their gradients $f_{ix}(t,x)$ and the gradient of $f_0(t,x)$. In the first problem considered $f_0(t,x)$ is identically zero, in the second takes the form of \eqref{eqn_black_sheep} and in the last problem the form of \eqref{eqn_minimum acceleration}. Since the agent is dynamicless, there are not physical constraints on the derivatives of the system, therefore the gain $\varepsilon$ in \eqref{eqn_action_descent}-\eqref{eqn_multiplier_ascent} can be set to have any value.

%
\subsection{Strongly feasible trajectories}\label{sec_pure_feasibility}
We consider a problem without optimality criterion in which case \eqref{eqn_action_descent}-\eqref{eqn_multiplier_ascent} simplifies to \eqref{eqn_non_opti}-\eqref{eqn_multiplier_ascent} and the strong feasibility result in Theorem \ref{theo:not_opti} applies. The system's behavior is illustrated in Figure \ref{fig:trajectory} when the gain is set to $\varepsilon = 50$. In this problem the average and maximal speed of the sheep is $5.1km/h$ and $14.8km/h$ respectively while for the shepherd these are $6.1km/h$ and $18.3 km/h$ for the selected gain. This speeds are in in the range of reasonable velocities for this particular problem. A qualitative examination of the sheep and shepherd paths shows that the shepherd succeeds in following the herd. A more quantitative evaluation is presented in Figure \ref{fig_relation_violation_multipliers} where we plot the instantaneous constraint violation $f_i(t,x(t))$ with respect to each sheep for the trajectories $x(t)$. Observe the oscillatory behavior that has the constraint violations $f_i(t,x(t))$ hovering at around $f_i(t,x(t))=0$. When the constraints are violated, i.e., when $f_i(t,x(t))>0$, the saddle point controller drives the shepherd towards a position that makes him stay within $r_i$ of all sheep. When a constraint is satisfied we have $f_i(t,x(t))<0$. This drives the multiplier $\lambda_i(t)$ towards 0 and removes the force that pushes the shepherd towards the sheep (c.f. Figure \ref{fig_relation_violation_multipliers}). The absence of this force makes the constraint violation grow and eventually surpass the maximum tolerance $f_i(t,x(t))=0$. At this point the multipliers start to grow and, as a consequence, to push the shepherd back towards proximity with the sheep. 

The behavior observed in Figure \ref{fig_relation_violation_multipliers} does not contradict the result in Theorem \ref{theo:not_opti} which gives us a guarantee on fit, not on instantaneous constraint violations. The components of the fit are shown in Figure \ref{fig:constraint_violation} where we see that they are indeed bounded. Thus, the trajectory is feasible in the sense of Definition \ref{def_0ptimality_and_viability}, even if the instantaneous problem's constraints are being violated at specific time instances. Further note that the fit is not only bounded but actually becomes negative. This is a consequence of the relatively large gain $\varepsilon=50$ which helps the shepherd to respond quickly to the sheep movements. The fit for a second experiment in which the gain is reduced to $\varepsilon=5$ is shown in Figure \ref{fig:violation_5eps}. In this case the fit stabilizes at a positive value. This behavior is expected because reducing $\varepsilon$ decreases the speed with which the shepherd can adapt to changes in the sheep paths. More to the point, the bound on the fit in Theorem \ref{theo:not_opti} is inversely proportional to the gain $\varepsilon$. The paths and instantaneous constraints violations for $\varepsilon=5$ are not shown but they are  qualitatively similar to the ones shown for $\varepsilon=50$ in figures \ref{fig:trajectory} and \ref{fig_relation_violation_multipliers}.
%
%
%
%
\begin{figure}\centering
\includegraphics[width=\linewidth, height=0.62\linewidth]{./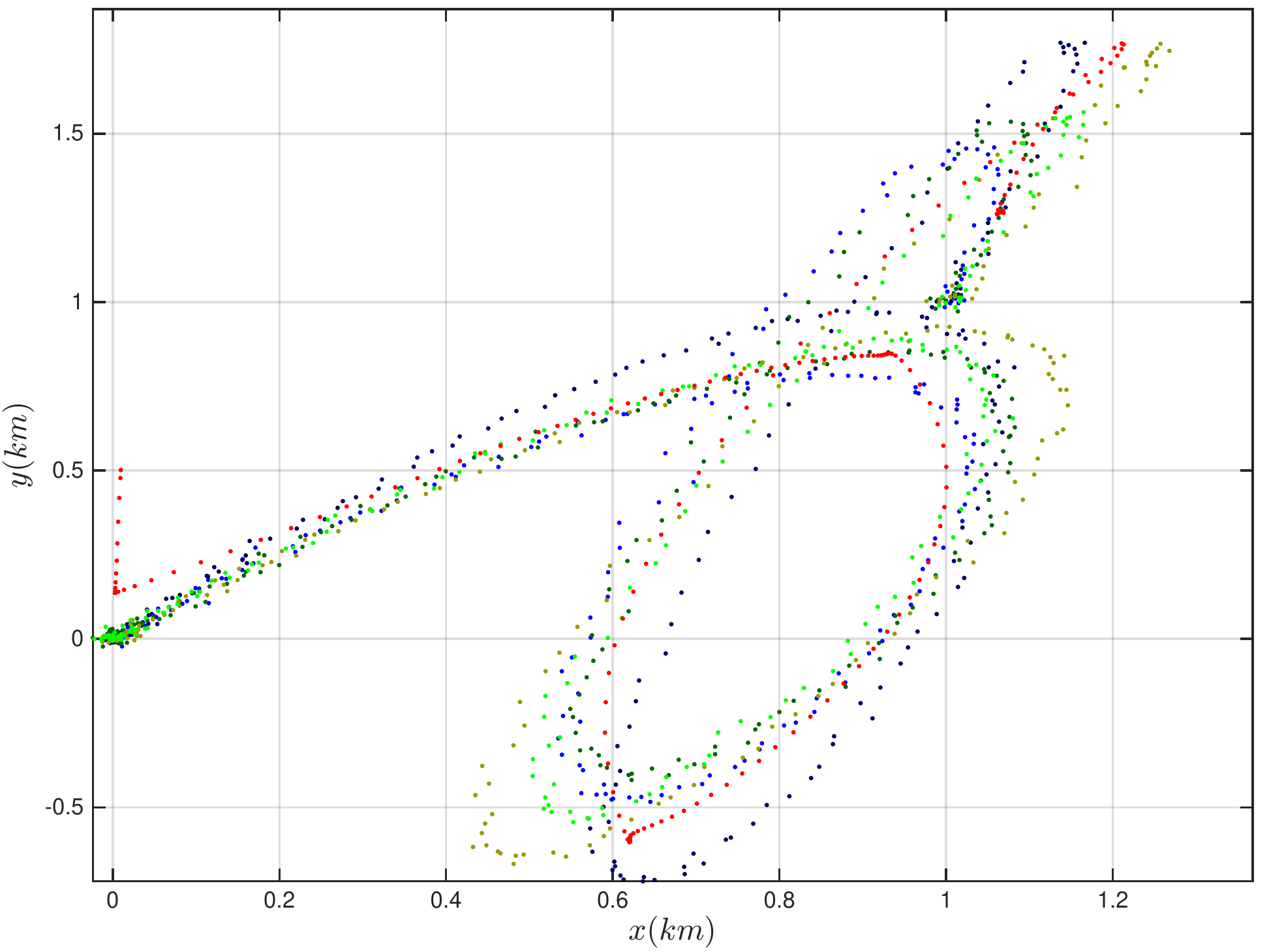} 
\caption{Path of the sheep and the shepherd for the feasibility-only problem (Section \ref{sec_pure_feasibility}) when the gain of the saddle point controller is set to be $\varepsilon =50$. The shepherd succeed in following the herd since its path -- in red -- is close to the path of all sheep.}
\label{fig:trajectory}\end{figure}
%
\begin{figure}
       \centering
        \begin{subfigure}[b]{\linewidth}
               \includegraphics[width=\linewidth, height=0.62\linewidth]{./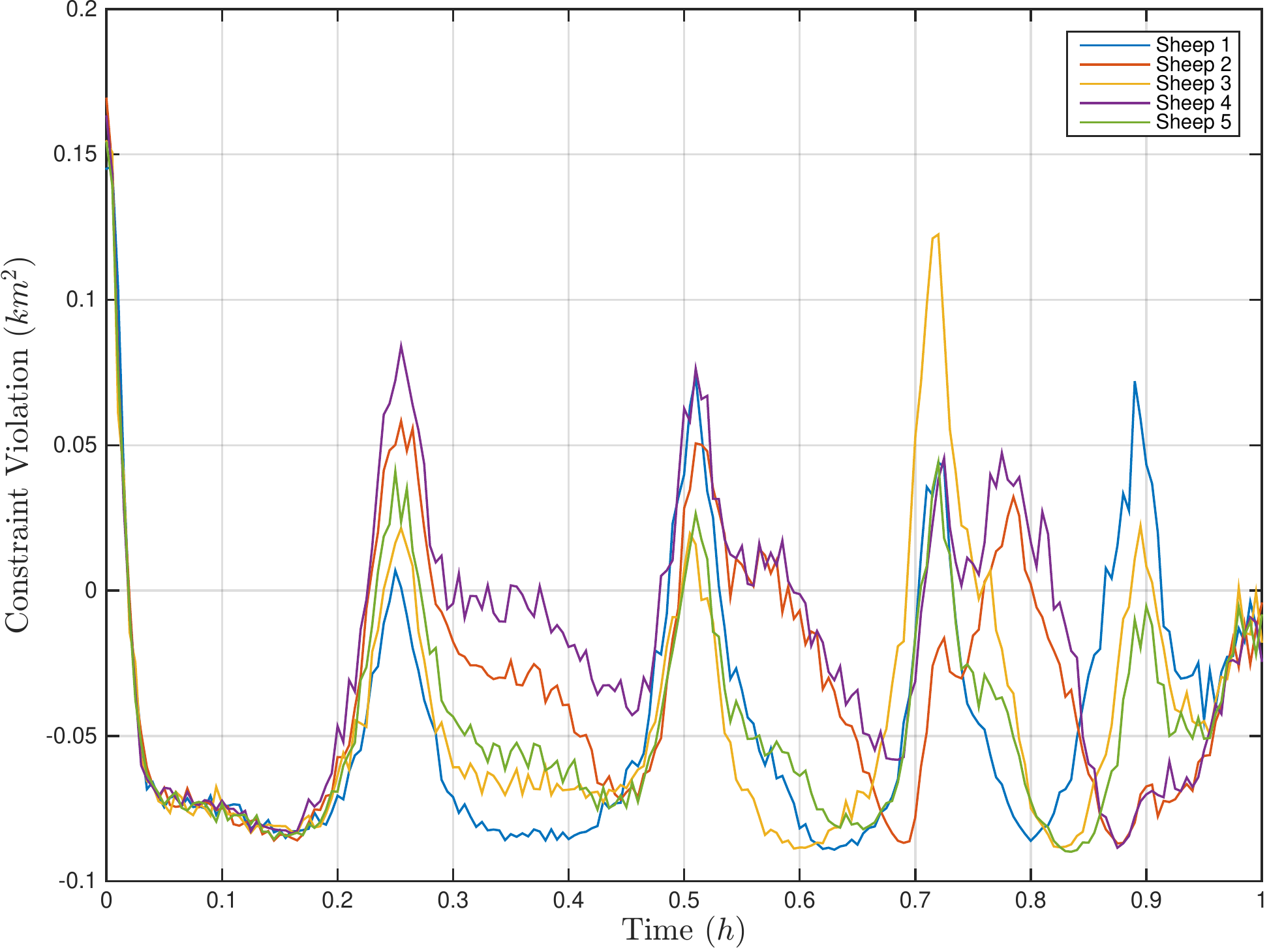}
               \caption{Instantaneous constraint value. } \label{fig:instant_violation}
        \end{subfigure}\par\vfill \bigskip
        \begin{subfigure}[b]{\linewidth}
                \includegraphics[width=\linewidth, height=0.62\linewidth]{./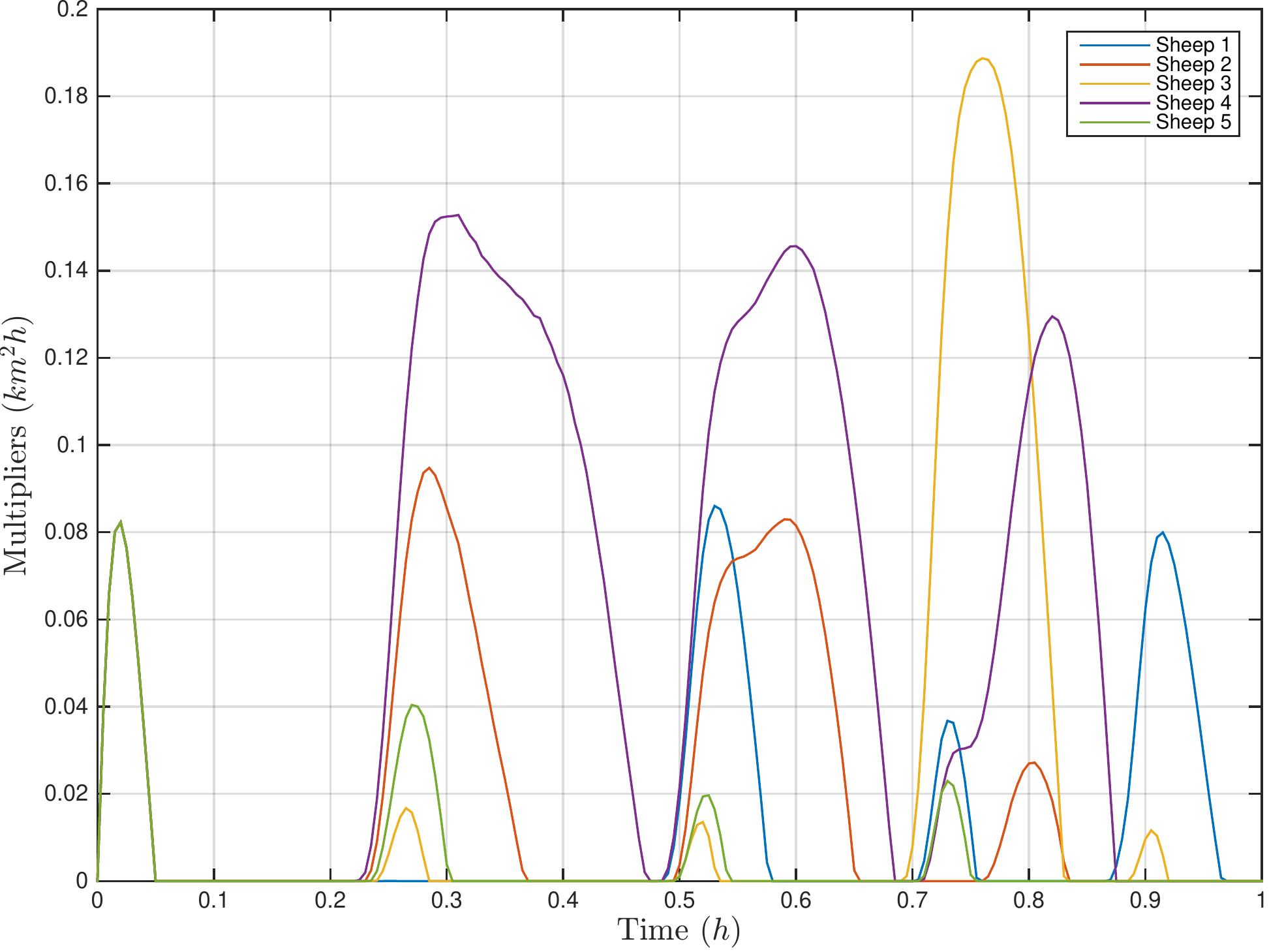}
\caption{Temporal evolution of the multipliers.} \label{fig_multipliers_feasibility}
        \end{subfigure}\bigskip
        \caption{Relationship between the instantaneous value of the constraints and their corresponding multipliers for the feasibility-only problem (Section \ref{sec_pure_feasibility}). At the times in which the value of a constraint is positive, its corresponding multiplier increases. When the value of the multipliers is large enough a decrease of the value of the constraint function is observed. Once the constraint function is negative the corresponding multiplier decreases until it reaches zero. }\label{fig_relation_violation_multipliers}
\end{figure}
%
%
%
%
%

\begin{figure*}
        \centering
        \begin{subfigure}[b]{0.5\linewidth}
                \includegraphics[width=\linewidth, height=0.62\linewidth]{./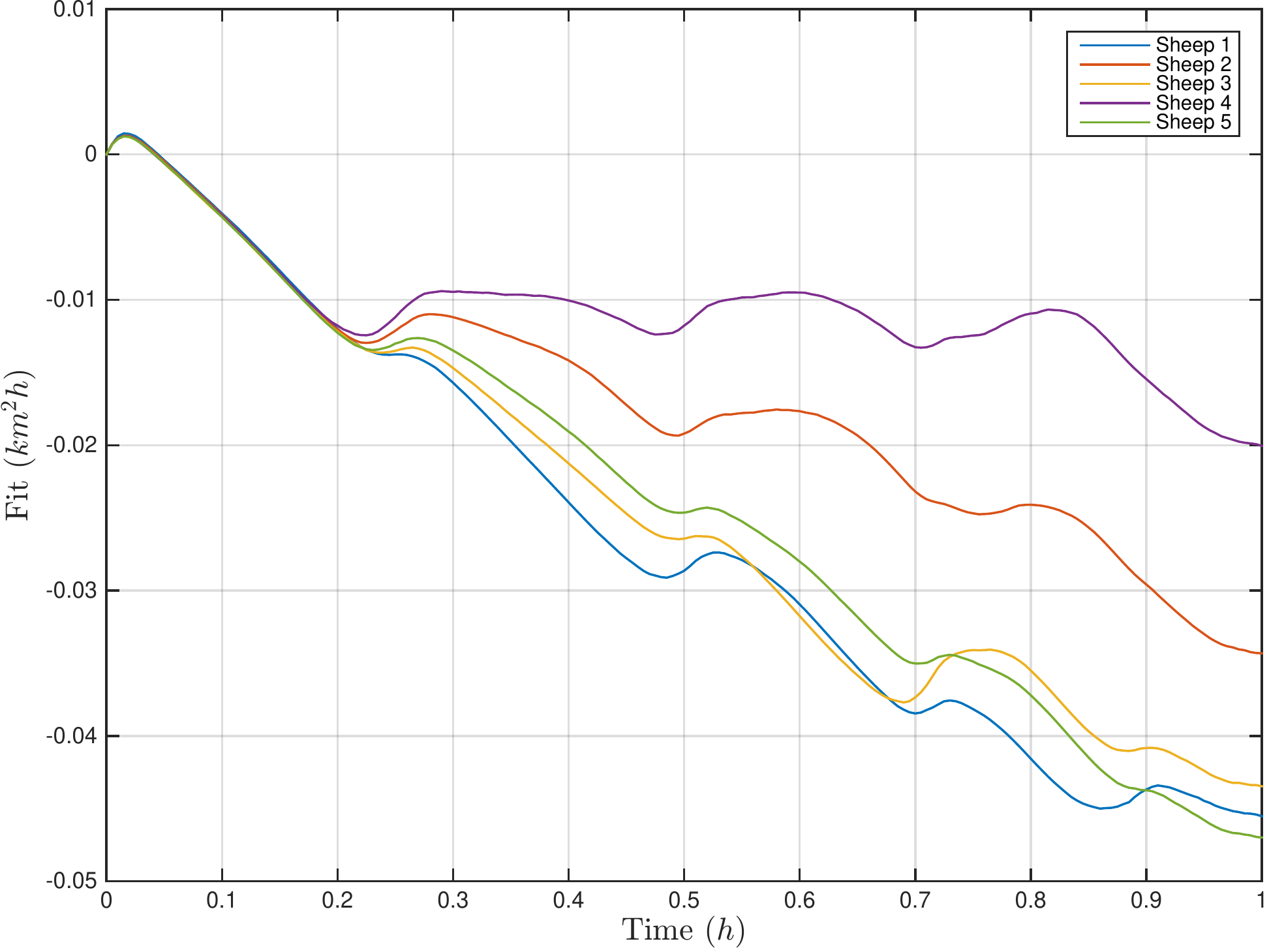}
                \caption{Experiment with gain $\varepsilon = 50$.}
                \label{fig:constraint_violation}
        \end{subfigure}%
        ~ 
        \begin{subfigure}[b]{0.5\linewidth}
                \includegraphics[width=\linewidth, height=0.62\linewidth]{./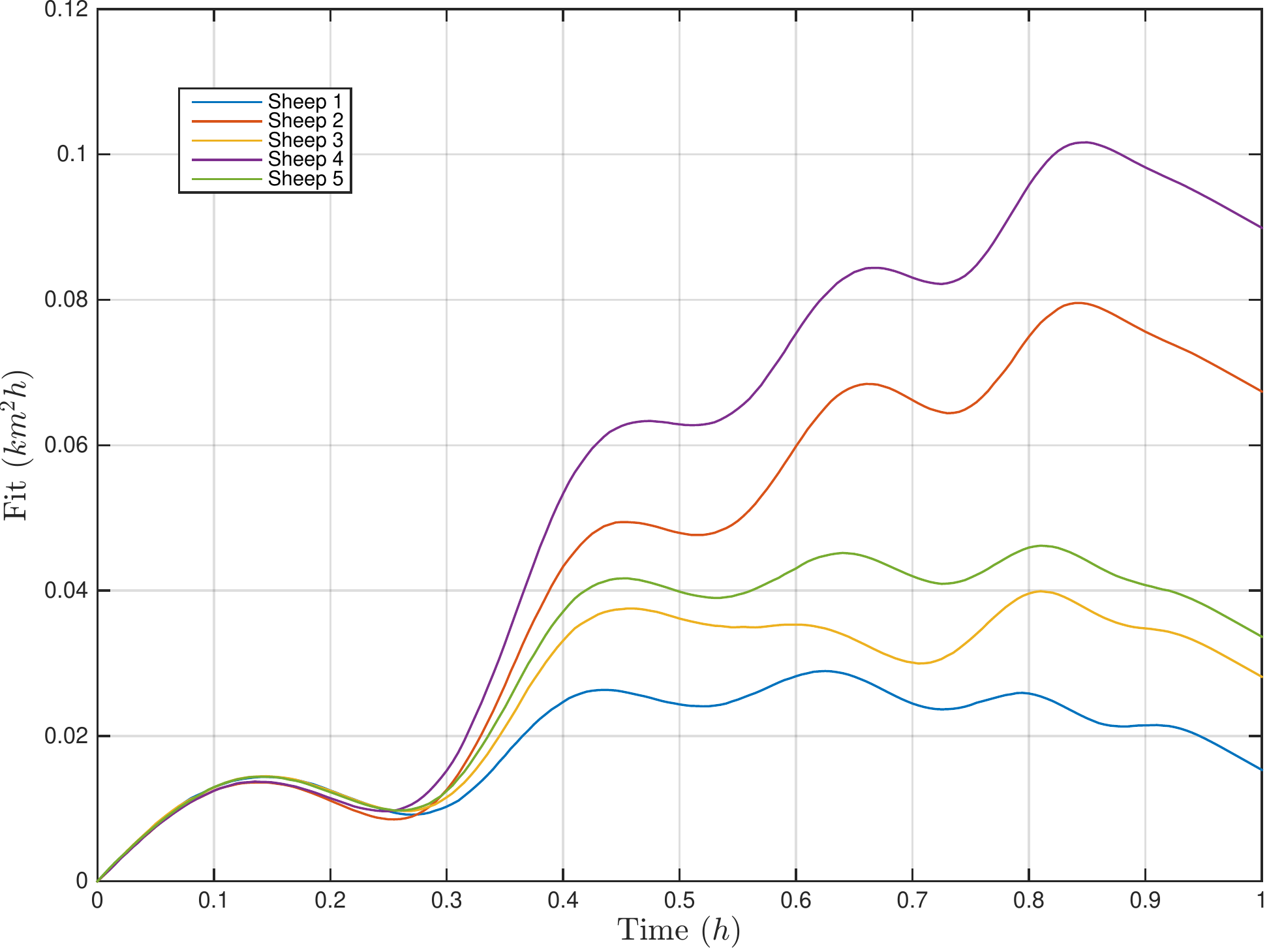}
                \caption{Experiment with gain $\varepsilon = 5$.}
                \label{fig:violation_5eps}
        \end{subfigure}
        ~ 
        \caption{Fit $\mathcal{F}_T$ for two different controller gains in the feasibility-only problem (Section \ref{sec_pure_feasibility}). Fit is bounded in both cases as predicted by Theorem \ref{theo:not_opti}. As is also predicted by Theorem \ref{theo:not_opti}, the larger the value of the gain $\varepsilon$ the smaller the bound on the fit of the shepherd's trajectory.}\label{fig_fit}
\end{figure*}
%
%
%
%
%
%
\subsection{Preferred sheep problem}\label{sec_preferred_sheep}
Besides satisfying the constraints in \eqref{eqn_sheep_environment}, the shepherd wishes to follow the first (black) sheep as close as possible. This translates into the optimality criterion \eqref{eqn_black_sheep}. Since the sheep trajectories are viable the hypotheses of Theorem \ref{theo:opti} hold. Thus, for a shepherd following the dynamics  \eqref{eqn_action_descent} and \eqref{eqn_multiplier_ascent}, the resulting trajectory is feasible and strongly optimal. 

Given that the trajectory is guaranteed to be feasible, we expect to have the fit bounded by a sublinear function of $T$. This does happen, as can be seen in the fit trajectories illustrated in Figure \ref{fig_fit_preferred_sheep} where a gain $\varepsilon =50$ is used. In fact, the fit does not grow and is bounded by a constant for all time horizons $T$. The trajectory is therefore not only feasible but strongly feasible. This does not contradict Theorem \ref{theo:opti} because strong feasibility implies feasibility. The reason why it's reasonable to see bounded fit here is that the objective function pushing the shepherd closer to the sheep is, in a sense, redundant with the constraints that push the shepherd to stay closer to all sheep. This redundancy can be also observed in the fact that the fit in this problem (c.f. Figure \ref{fig_fit_preferred_sheep}) is smaller than the fit in the problem of Section \ref{sec_pure_feasibility} (c.f. Figure \ref{fig:constraint_violation}). To explain why this may happen, focus on the value of the multipliers in Figure \ref{fig_multipliers_feasibility} between, e.g., times  $0.07\text{h} < t < 0.21\text{h}$. During this time the multipliers are equal to zero because all constraints are satisfied. As a consequence, the Lagrangian subgradient with respect to the action is identically zero in the time interval. In turn, this implies that the action is constant and no effort is made to reduce the value of the constraints. If the optimality criterion was present, the shepherd would be pushed towards the black sheep and fit would be further reduced.

The regret corresponding to the trajectory for this experiment with $\varepsilon =50$ is shown in Figure \ref{fig_regret_preferred_sheep}. Since the trajectory is strongly optimal as per Theorem \ref{theo:opti}, we expect regret to be bounded. This is the case in Figure \ref{fig_regret_preferred_sheep}
%
%
The path of the shepherd is not shown for this experiment as it is qualitatively analogous to the one in Figure \ref{fig:trajectory} for the feasibility-only problem considered in Section \ref{sec_pure_feasibility}. 

%
\subsection{Minimum acceleration problem}\label{sec_minimum_acceleration}

We consider, an environment defined by the distances between the shepherd and the sheep given by \eqref{eqn_sheep_environment}, with the minimum acceleration objective defined in \eqref{eqn_minimum acceleration}. Since the construction of the target trajectories gives a viable environment we satisfy, again, the hypotheses of Theorem \ref{theo:opti}. Hence, for a shepherd following the dynamics given by \eqref{eqn_action_descent} and \eqref{eqn_multiplier_ascent}, the action trajectory is feasible and strongly optimal. In this section the gain of the controller is set to $\varepsilon = 50$. 

A feasible trajectory implies that the fit must be bounded by a function that grows sublinearly with the time horizon $T$. Notice that this is the case in Figure \ref{fig_fit_acceleration}. Periods of growth of the fit are observed, yet the presence of inflection points is an evidence of the growth being controlled. The fit in this problem is larger than the one in problem \ref{sec_preferred_sheep} (c.f figures \ref{fig_fit_preferred_sheep} and \ref{fig_fit_acceleration}). This result is predictable since the constraints and the objective function push the action in different directions. For instance, suppose that all constraints are satisfied and that the Lagrange multipliers are zero. Then, the subgradient of the Lagrangian is equal to the subgradient of the objective function. Hence the action will be modified trying to minimize the acceleration without taking the constraints (distance with the sheep) into account. Hence, pushing the action to the boundary of the feasible set. In this problem, this translates into the fact that the shepherd does not follow the sheep as closely as in the problems in sections  \ref{sec_pure_feasibility} and \ref{sec_preferred_sheep} (c.f Figure \ref{fig_trajectory_acceleration}).   

Since the trajectory is strongly optimal, we should observe a regret bounded by a constant. This is the case in Figure \ref{fig_regret_acceleration}, where in fact we observe negative regret for some time intervals. Negative regret implies that the trajectory of the shepherd is incurring a total cost that is smaller than the one associated with the optimal solution. Notice that while the optimal fixed action minimizes the total cost as defined in \eqref{eqn_optimal_strategy} it does not minimize the objective at all times. Thus, by selecting different actions the shepherd can suffer smaller instantaneous losses than the ones associated with the optimal fixed action. If this is the case, regret -- which is the integral of the difference between these two losses -- can be negative. 
%
%
\begin{figure}\centering
\includegraphics[width=\linewidth, height=0.62\linewidth]{./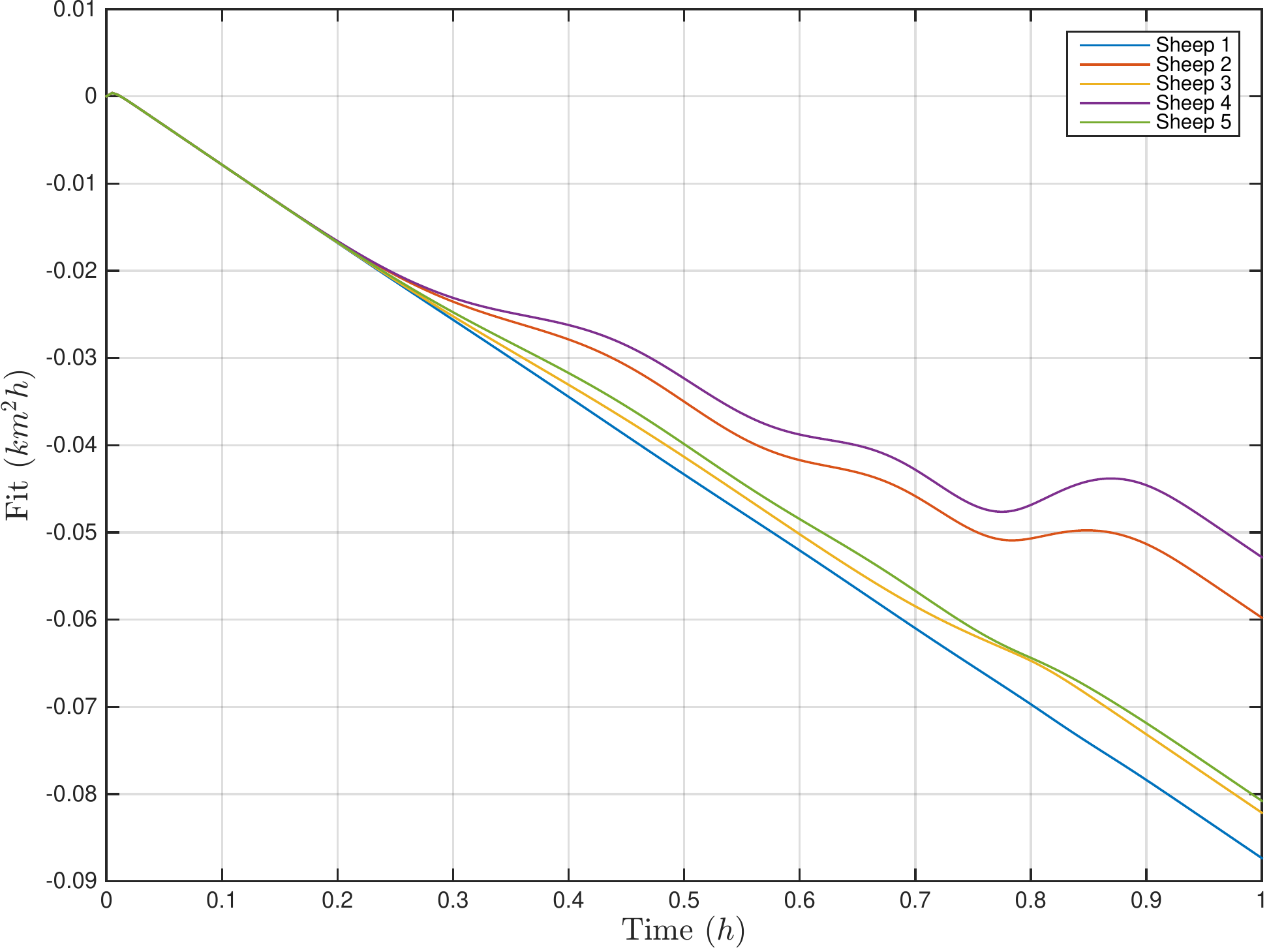}
\caption{Fit $\mathcal{F}_T$ for the preferred sheep problem (Section \ref{sec_preferred_sheep}) when the gain of the saddle point controller is set to be $\varepsilon=50$. As predicted by Theorem \ref{theo:opti} the trajectory is feasible since the fit is bounded, and, in fact, appears to be strongly feasible. Since the subgradient of the objective function is the same as the subgradient of the first constrain the fit is smaller than in the pure feasibility problem (c.f Figure \ref{fig_fit}). }
\label{fig_fit_preferred_sheep}\end{figure}

%
\begin{figure}\centering
\includegraphics[width=\linewidth, height=0.62\linewidth]{./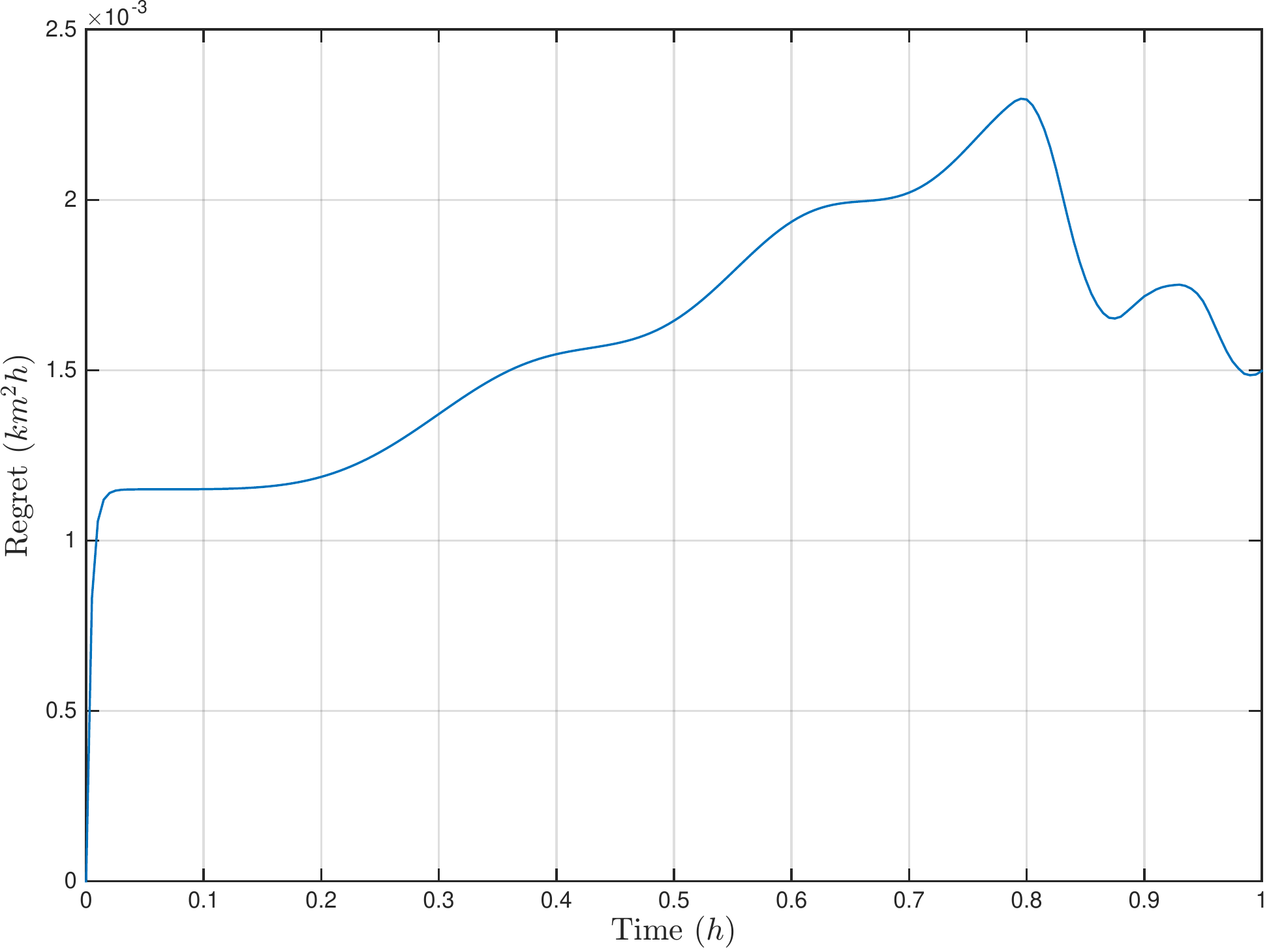}
\caption{Regret $\mathcal{R}_T$ for the preferred sheep problem (Section \ref{sec_preferred_sheep}) when the gain of the saddle point controller is set to be $\varepsilon=50$. The trajectory is strongly optimal, as predicted by Theorem \ref{theo:opti}, since the regret is bounded by a constant. The initial increment in the regret is due to the fact that the shepherd starts away from the first sheep while in the optimal offline trajectory would start close to it.}
\label{fig_regret_preferred_sheep}\end{figure}

%
\begin{figure}\centering
\includegraphics[width=\linewidth, height=0.62\linewidth]{./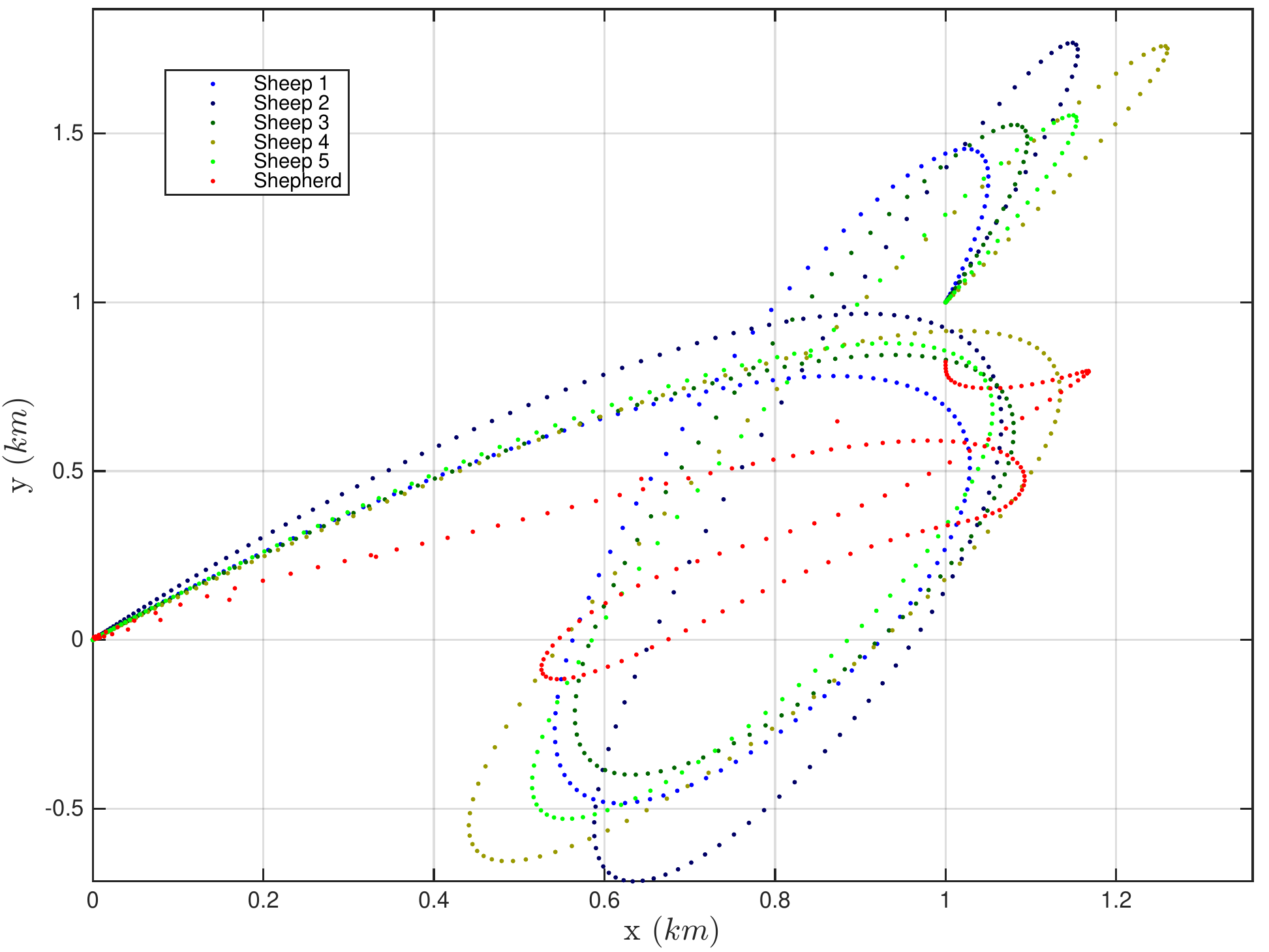}
\caption{Path of the sheep and the shepherd for the minimum acceleration problem (Section \ref{sec_minimum_acceleration}) when the gain of the saddle point controller is set to be $\varepsilon =50$. Observe that the shepherd path -- in red -- is not as close to the path of the sheep as in Figure \ref{fig:trajectory}. This is reasonable because the objective function and the constraints push the shepherd in different directions.}
\label{fig_trajectory_acceleration}\end{figure}

%
\begin{figure}\centering
\includegraphics[width=\linewidth, height=0.62\linewidth]{./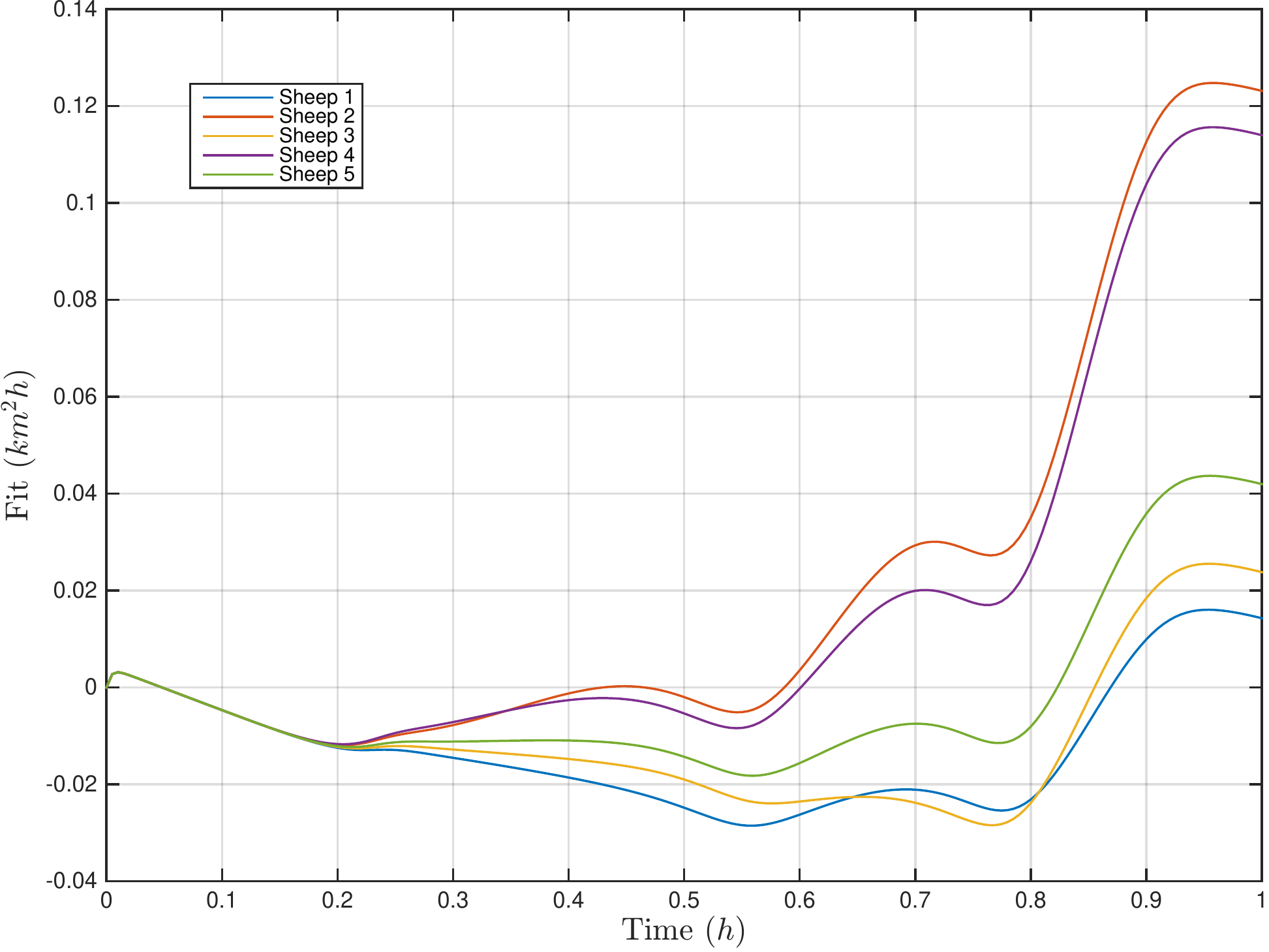}
\caption{Fit $\mathcal{F}_T$ for the minimum acceleration problem (Section \ref{sec_minimum_acceleration}) when the gain of the saddle point controller is set to $\varepsilon=50$. Since the fit is bounded, the trajectory is feasible in accordance with Theorem \ref{theo:opti}. Since the gradient of the objective function and the gradient of the feasibility constraints tend to point in different directions, the fit is larger than in the preferred sheep problem (c.f Figure \ref{fig_fit_preferred_sheep}).}
\label{fig_fit_acceleration}\end{figure}

%
\begin{figure}\centering
\includegraphics[width=\linewidth, height=0.62\linewidth]{./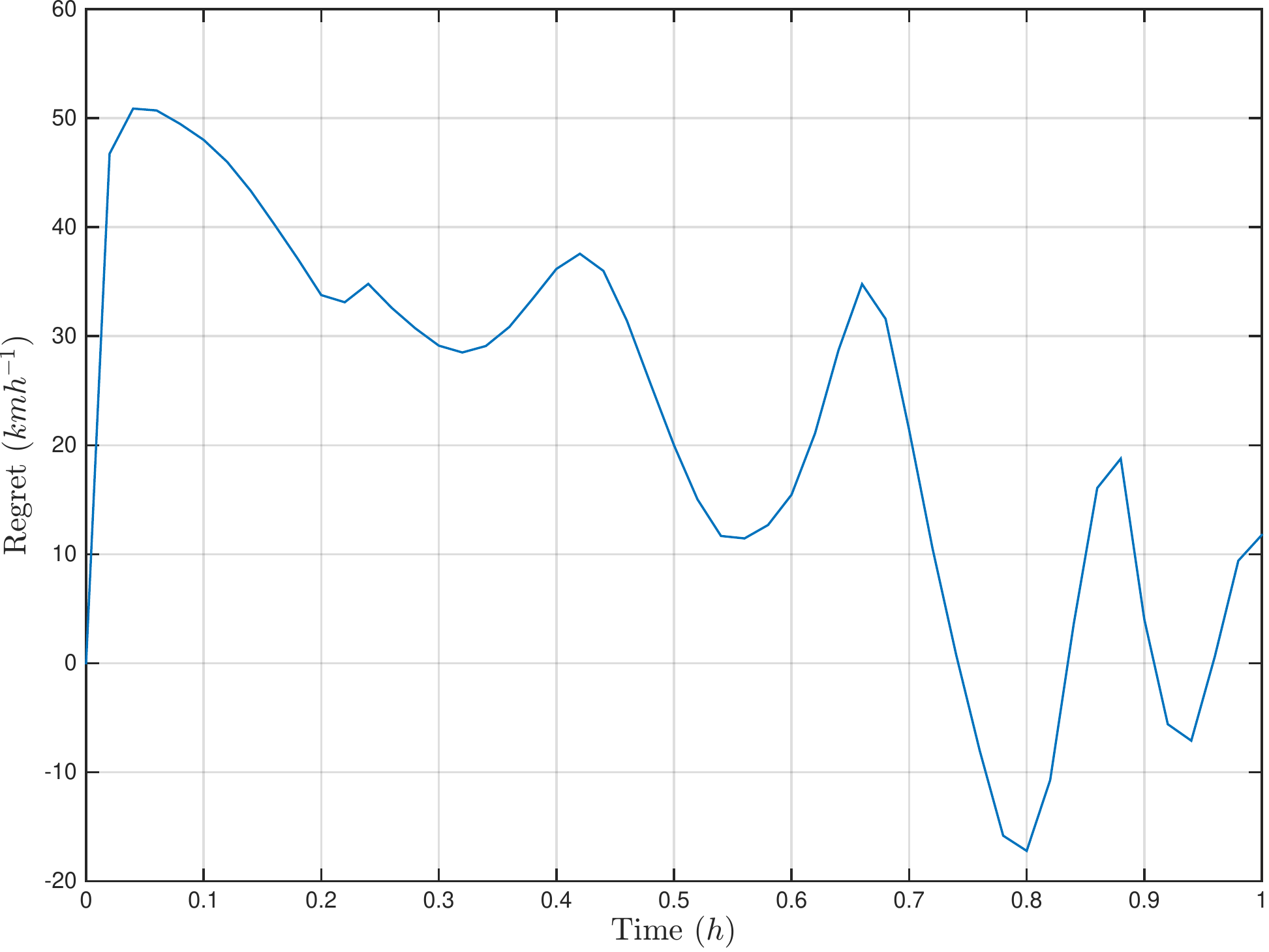}
\caption{Regret $\mathcal{R}_T$ for the minimum acceleration problem (Section \ref{sec_minimum_acceleration}) when the gain of the saddle point controller is set to be $\varepsilon=50$. The trajectory is strongly optimal as predicted by Theorem \ref{theo:opti}. Observe that regret is negative due to the fact that the agent is allowed to select different actions at different times as opposed to the clairvoyant player that is allowed to select a fixed action.}
\label{fig_regret_acceleration}\end{figure}
%
%
\subsection{Saturated Fit}\label{sec_saturated_fit}
We apply the modified saddle point algorithm in the setting of Section \ref{sec_preferred_sheep} so to consider the saturated fit [c.f. \eqref{eqn_saturated_fit}] in lieu of the fit. Since the construction of the target trajectories gives a viable environment the hypotheses of Corollary \ref{corollary_saturated_fit2} are satisfied. Hence for a shepherd following the dynamics given by \eqref{eqn_action_descent} and \eqref{eqn_multiplier_ascent}, the trajectories are such that have saturated fit bounded by a function that grows sub linearly and bounded regret. For the simulation in this section the gain of the controller is set to $\varepsilon = 50$. Observe that the shepherd succeeds in following the herd, since his path remains close to the sheep (c.f. Figure \ref{fig_saturated_trajectory}). As predicted by the Corollary \ref{corollary_saturated_fit2} the fit of the trajectory is bounded by a function that grows sub linearly and the regret is bounded by a constant as it can be observed in figures \ref{fig_saturated_fit} and \ref{fig_saturated_regret} respectively. Further notice that the regret in this scenario is similar to the regret of the trajectory in the preferred sheep problem (c.f. Section \ref{sec_preferred_sheep}). 

%
%
\begin{figure}\centering
\includegraphics[width=\linewidth, height=0.62\linewidth]{./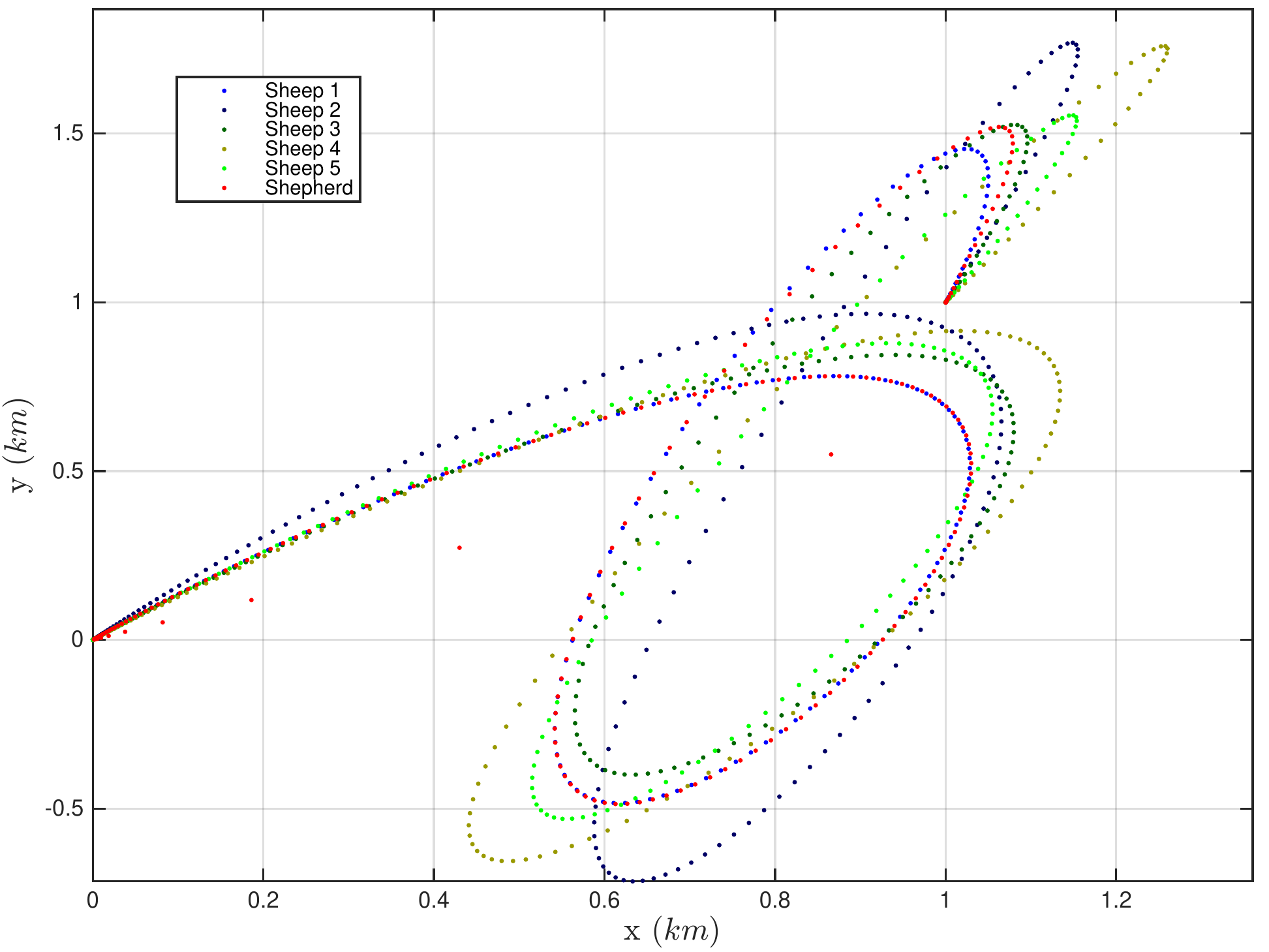}
\caption{Path of the sheep and the shepherd for preferred sheep problem when saturated fit is considered (Section \ref{sec_saturated_fit}) and the gain of the saddle point controller is set to be $\varepsilon =50$. The shepherd succeed in following the herd since its path -- in red -- is close to the path of all sheep.}
\label{fig_saturated_trajectory}\end{figure}

%
\begin{figure}\centering
\includegraphics[width=\linewidth, height=0.62\linewidth]{./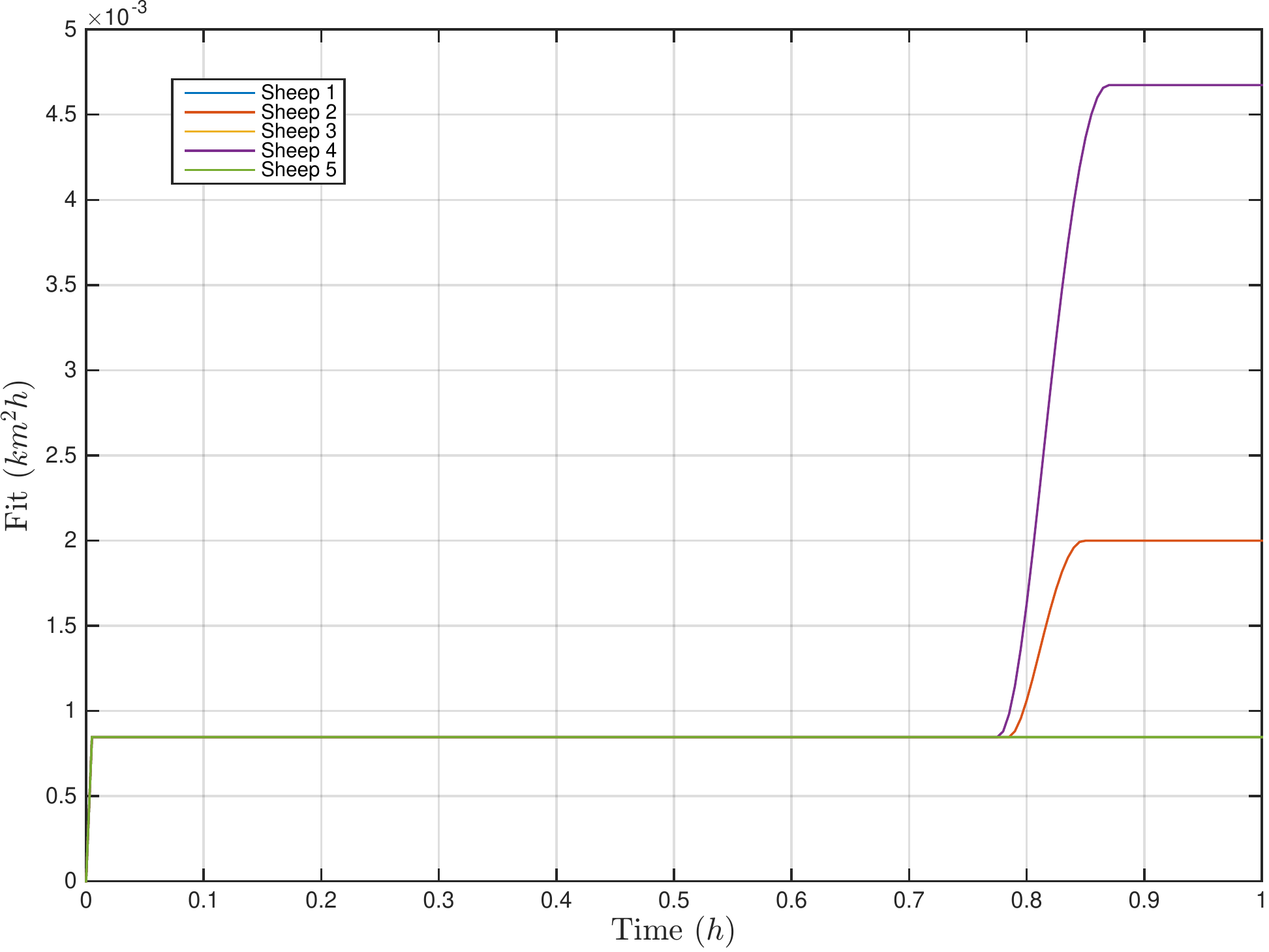}
\caption{Saturated fit $\mathcal{F}_T^{sat}$ for the preferred sheep problem (Section \ref{sec_saturated_fit}) when the gain of the saddle point controller is set to $\varepsilon=50$. Since the saturated fit grows sublinearly in accordance with Corollary \ref{corollary_saturated_fit2}, the trajectory is feasible. }
\label{fig_saturated_fit}\end{figure}

%
\begin{figure}\centering
\includegraphics[width=\linewidth, height=0.62\linewidth]{./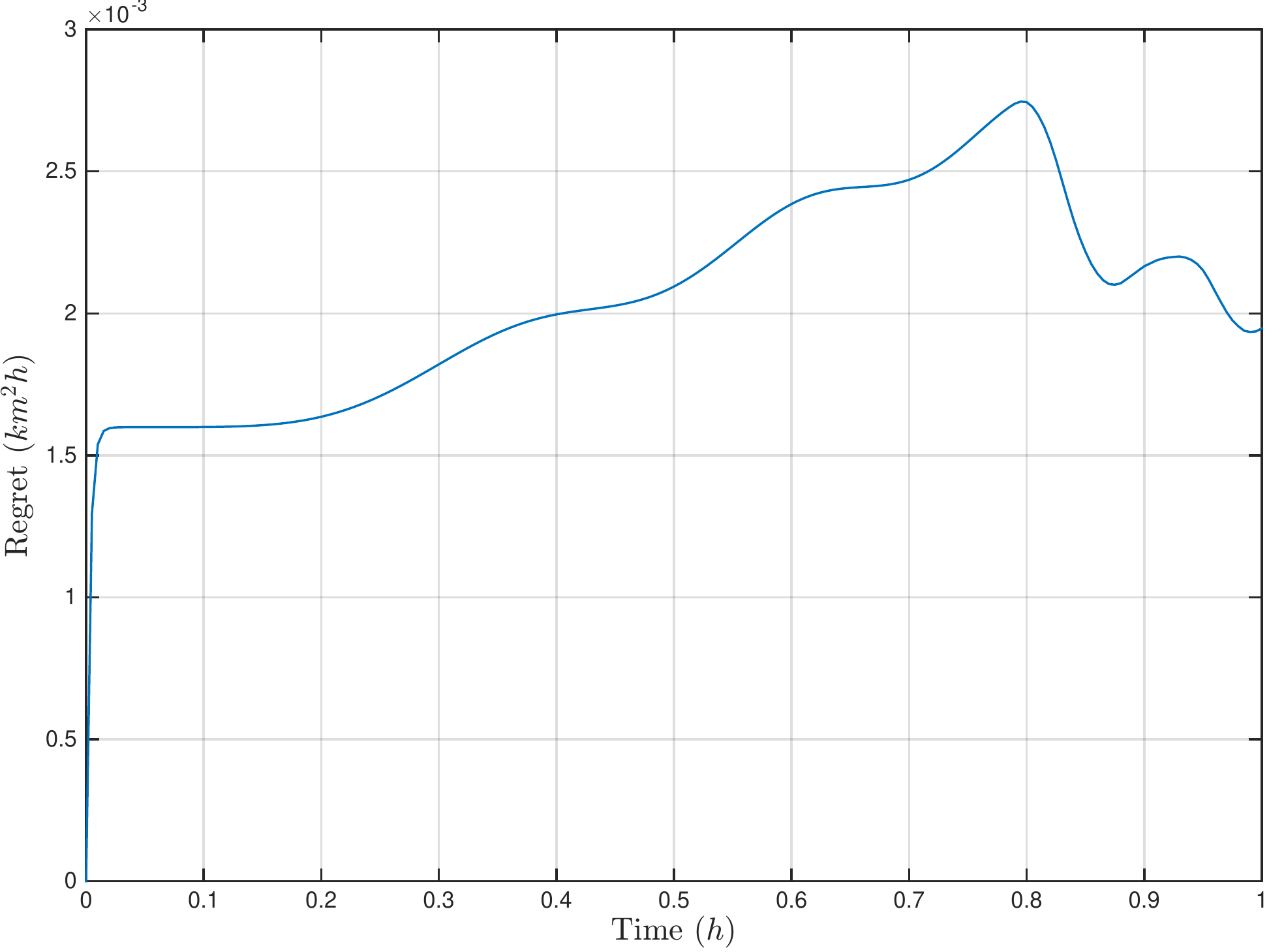}
\caption{Regret $\mathcal{R}_T$ for the preferred sheep problem when saturated fit is considered (Section \ref{sec_saturated_fit})and the gain of the saddle point controller is set to be $\varepsilon=50$. The regret is bounded as predicted by Corollary \ref{corollary_saturated_fit2} and therefore the trajectory is strongly optimal. Notice that regret in this case is identical to regret in the preferred sheep problem when regular fit is considered (c.f. Figure \ref{fig_regret_preferred_sheep}). }
\label{fig_saturated_regret}\end{figure}
%

%
\section{Conclusion}\label{sec_conclusions}
We considered a continuous time environment in which an agent must select actions to satisfy a set of constraints that are time varying and unknown a priori. We defined a viable environment as one in which there is a fixed action that satisfies the constraints at all times. We defined the fit as the cumulated constraint violation and the notions of feasible and strongly feasible trajectories. Feasible trajectories are such that the fit is bounded by a constant independent of the time horizon, and strongly feasible trajectories are such that the fit is bounded by a sublinear function of the time horizon. An objective function was considered to select a strategy that meets an optimality criterion and we defined regret in continuous time as the difference between the cumulative costs of the agent and the best clairvoyant agent. We then defined strongly optimal trajectories as those for which the regret is bounded by a constant that is independent of the time horizon. 

We proposed an online version of the saddle point controller of Arrow-Hurwicz to generate trajectories with small fit and regret. We showed that for any viable environment the trajectories that follow the dynamics of this controller are: (i) Strongly feasible if no optimality criterion is considered. (ii) Feasible and strongly optimal when an optimality criterion is considered. Numerical experiments on a shepherd that tries to follow a herd of sheep support these theoretical results. 

Future research includes studying asymptotic convergence of the saddle point dynamics to the optimal trajectory and studying systems with second order dynamics. In this setting, it is possible to add a term in the objective function that penalizes the action derivative, therefore allowing to control it and maintaining in a desired range.

%
\appendix
\section{Appendices}
\subsection{Proof of Lemma \ref{lemma:big_lemma}}\label{ap_big_lema_proof}
In order to develop this proof we need to define the tangent cone and to state Lemma \ref{lemma_tangent_cone} relating the projection of a vector over it and the projection over a convex set
\begin{definition}[Tangent cone]\label{def_tangent_cone}
Let $X \subset \reals^n$ be a closed convex set. We define the tangent cone to $X$ at $x_0$ as
\begin{equation}\label{eqn_tangent_cone} 
T_X(x_0) = \overline{\bigcup_{\theta > 0, x\in X}\theta(x-x_0)}.
\end{equation} 
\end{definition}
The above union is over all the points of the set $X$ and over all the positive reals $\theta$. Notice that the $\bigcup_{\theta>0}\theta(x-x_0)$ is the ray from $x_0$ and intersecting the point $x$. Thus, the tangent cone is the closure of the cone formed by all rays emanating from $x_0$ and intersecting at least one point $x\in X$ with $x\neq x_0$.
%
%
%
%
\begin{lemma}\label{lemma_tangent_cone}
Let $X \in \reals^n$ be a closed convex set, let $x_0\in X$ and let $v \in \reals^n$. Then the projection of v over the set $X$ at $x_0$ defined in \eqref{eqn_proj_over_X} is
\begin{equation}
\Pi_X(x_0,v) = P_{T_X(x_0)}(v).
\end{equation}
\end{lemma}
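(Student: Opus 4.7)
The plan is to show that the difference quotient $u_\delta := \bigl(P_X(x_0+\delta v)-x_0\bigr)/\delta$ that appears in Definition~\ref{def_projected_dynamical_system} lies in $T_X(x_0)$ for every $\delta>0$, stays uniformly bounded as $\delta\to 0^+$, and its unique limit is the projection of $v$ onto $T_X(x_0)$. By the definition of $\Pi_X(x_0,v)$ this identifies the two quantities. I carry this out in four steps.

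First I would check that $T_X(x_0)$ is a closed convex cone, so that $P_{T_X(x_0)}(v)$ is well defined and uniquely characterized by a variational inequality. The union in \eqref{eqn_tangent_cone} is the conical hull of the convex set $X-x_0$, which is itself convex; closure preserves convexity, and the resulting set is clearly a cone. Second, writing $y_\delta := P_X(x_0+\delta v)$ and choosing $\theta = 1/\delta$, $x=y_\delta$ in \eqref{eqn_tangent_cone} shows $u_\delta = \theta(y_\delta - x_0) \in T_X(x_0)$; and since $x_0\in X$ is a feasible competitor in the projection, $\|y_\delta-(x_0+\delta v)\|\leq \delta\|v\|$, so $\|u_\delta\|\leq 2\|v\|$ by the triangle inequality.

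Third, I would extract a variational inequality for $u_\delta$. The defining optimality condition of $P_X$ reads $\langle x_0+\delta v - y_\delta,\, x - y_\delta\rangle \leq 0$ for all $x\in X$. For any $x'\in X$ and any $\theta'\in(0,1/\delta)$, the point $x := (1-\delta\theta')x_0 + \delta\theta' x'$ lies in $X$ by convexity; substituting and dividing by $\delta^2$ gives
\[
\bigl\langle v - u_\delta,\ \theta'(x'-x_0) - u_\delta \bigr\rangle \leq 0.
\]

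Finally, I would pass to the limit. Boundedness of $\{u_\delta\}$ yields a convergent subsequence $u_{\delta_k}\to u^*$; closedness of $T_X(x_0)$ gives $u^*\in T_X(x_0)$. Fixing $x'\in X$ and $\theta'>0$, the displayed inequality holds for all $k$ large enough that $\delta_k<1/\theta'$, and letting $k\to\infty$ produces
\[
\bigl\langle v - u^*,\ \theta'(x'-x_0) - u^* \bigr\rangle \leq 0 \quad\text{for all } x'\in X,\ \theta'>0.
\]
By continuity of the inner product this inequality extends from the pre-closure union $\bigcup_{\theta>0,\,x\in X}\theta(x-x_0)$ to its closure $T_X(x_0)$, which is exactly the variational characterization of $u^* = P_{T_X(x_0)}(v)$. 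Uniqueness of the projection forces every subsequential limit to coincide, so the full limit $u_\delta \to P_{T_X(x_0)}(v)$ exists and equals $\Pi_X(x_0,v)$ by Definition~\ref{def_projected_dynamical_system}.

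The main obstacle is the limit step: I must be careful about the constraint $\theta'\in(0,1/\delta)$ while sending $\delta\to 0^+$ (solved by fixing $\theta'$ first and only then shrinking $\delta$), and about extending the limiting inequality from the pre-closure union to the full tangent cone $T_X(x_0)$ by a density/continuity argument.
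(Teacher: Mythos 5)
Your proof is correct, but it takes a different route from the paper: the paper does not prove this lemma at all, it simply invokes Lemma 4.6 of the projected dynamical systems reference \cite{Zhang95}, whereas you give a self-contained first-principles argument. Your chain of steps is sound: the difference quotient $u_\delta=(P_X(x_0+\delta v)-x_0)/\delta$ lies in the (pre-closure) cone by taking $\theta=1/\delta$, the competitor $x_0$ in the projection gives $\|u_\delta\|\le 2\|v\|$, the substitution $x=(1-\delta\theta')x_0+\delta\theta' x'$ into the variational inequality $\langle x_0+\delta v-y_\delta,\,x-y_\delta\rangle\le 0$ cleanly yields $\langle v-u_\delta,\ \theta'(x'-x_0)-u_\delta\rangle\le 0$ after dividing by $\delta^2$, and the order of quantifiers in the limit step (fix $\theta'$, then let $\delta\to 0^+$) together with the density/continuity extension to the closed cone gives exactly the variational characterization of $P_{T_X(x_0)}(v)$; uniqueness of projections then upgrades subsequential convergence to convergence of the full limit. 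What your argument buys beyond the citation is nontrivial: it establishes that the limit defining $\Pi_X(x_0,v)$ in Definition~\ref{def_projected_dynamical_system} actually exists, which the paper's definition tacitly assumes, and it only uses the standard obtuse-angle characterization of projections plus convexity of the conical hull of $X-x_0$ (which you rightly verify so that $P_{T_X(x_0)}$ is well defined). What the paper's approach buys is brevity and consistency with the projected dynamical systems literature on which the rest of its analysis (existence of solutions to the projected ODE) also relies. If you flesh out the sketch, the only points worth making explicit are the compactness argument that turns subsequential limits into a genuine limit as $\delta\to 0^+$, and the remark that $0\in T_X(x_0)$ (take $x=x_0$), though neither is a gap in the reasoning as written.
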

\begin{proof}
The proof follows from Lemma 4.6 in \cite{Zhang95}.
\end{proof}
\begin{proof}[Proof of Lemma \ref{lemma:big_lemma}]
Consider the case in which $x_0 \in int(X)$. Then, for any $v$ there exits a small enough $\delta > 0$ such that $x_0+\delta v \in X$. Hence $P_X(x_0 + \delta v) = x_0+\delta v$ and it holds that
\begin{equation}
P_X(x_0+\delta v) - x_0  = v\delta .
\end{equation}
Thus $\Pi_X(x,v) = v$ and \eqref{eqn_big_lemma} is verified.
When $x_0\in \partial X$ two cases are possible; either $x_0 + \delta v \in T_X(x_0)$ for small enough $\delta >0$ or $x_0+ \delta v \notin T_X(x_0)$ for all $\delta >0$. In the first case because of Lemma \ref{lemma_tangent_cone} it is verified that
\begin{equation}
\Pi_X(x_0,v) = P_{T_X(x_0)}(v) = v.
\end{equation}
And therefore \eqref{eqn_big_lemma} holds. Let us now consider the last case in which $x_0 \in \partial X$ and $x_0 +\delta v \notin T_X(x_0)$. Because X is a convex set there exists a vector $a \in \reals^n$ with $\|a \| =1$ defining a supporting hyperplane at $x_0$ 
%
$\mathcal{H} = \{ x\in \reals^n : a^T(x-x_0) = 0 \},$
%
and for all $x \in X$ we have that
\begin{equation}\label{eqn_supporting_hyperplane}
a^T(x-x_0) \leq 0.
\end{equation}
If the set $X$ is smooth at $x_0$ then the border of the tangent cone at the point $x_0$ is contained in the hyperplane $\mathcal{H}$, therefore $\Pi_X(x_0,v) \subset \mathcal{H}$. Thus, $a^T \Pi_X(x_0,v) = 0$ and we have as well that $a^T v \geq 0$, otherwise there must exists a $\delta > 0$ such that $x_0 + \delta v \in T_X(x_0)$. On the other hand if there is a corner at $x_0$ there are infinite supporting hyperplanes.  One of them verifies that $a^T v \geq 0$ and contains the boundary of the tangent cone, thus $a^T \Pi_X(x_0,v) = 0$. Since $\Pi_X(x_0,v)$ is the projection of $v$ over the tangent cone, we have that: $\Pi_X(x_0,v) = P_{T_X(x_0)}(v) = (a_\perp^T v) a_\perp$, where $a_\perp \in \reals^n$ and verifies that $a^T a_\perp =0$ and $\| a_\perp \|=1$. Projecting the vectors $x_0 -x$ and $v$ over $a$ and $a_\perp$, we have
\begin{equation}
(x_0 -x)^Tv = (x_0-x)^Tav^Ta + (x_0-x)^Ta_\perp v^Ta_\perp .
\end{equation}
From the previous discussion the above equation reduces to
\begin{equation}
(x_0 - x)^T v = (x_0-x)^T a v^T a +(x_0-x)^T \Pi_X(x_0,v).
\end{equation}
By combining the fact that $v^T a \geq 0$ and \eqref{eqn_supporting_hyperplane} the left hand side of the above equality can be lower bounded by
\begin{equation}
(x_0 -x)^T v \geq (x_0 -x )^T \Pi_X (x_0,v).
\end{equation}
Hence we have proved the lemma for all posible cases. 
\end{proof}

%
\subsection{Proof of Lemma \ref{lemma:regret_lower_bound}}\label{ap_regret_lower_bound}
%
%
%
Let $x(t)$ be the action at time $t$ when the agent follows the dynamics defined by \eqref{eqn_action_descent} and \eqref{eqn_multiplier_ascent}, because of Assumption \ref{as:lower_bound}, we have that
\begin{equation}
f_0(t,x(t)) - f_0(t,x^*) \geq -K, 
\end{equation} 
Integrating both sides of the above equation yields
\begin{equation}
\int_{0}^T f_0(t,x(t))dt - \int_{t=0}^Tf_0(t,x^*) dt\geq -KT.
\end{equation}
Since the left hand side of the above equation is the regret up to time $T$ defined in \eqref{eqn_continuous_regret}, the proof is completed. 
%
\subsection{Proof of Theorem \ref{theo:opti}}\label{ap_theo_opti}

Consider action trajectories $x(t)$ and multiplier trajectories $\lambda(t)$ and the  corresponding energy function $V_{\bbarx,\bar{\lambda}}(x(t),\lambda(t))$ in \eqref{eqn_lyapunov}, for arbitrary given action $\bbarx \in \reals^n$ and multiplier $\bar{\lambda}\in \Lambda$. The derivative $\dot V_{\bbarx,\bar{\lambda}}(x(t),\lambda(t))$ is given by
\begin{equation}\label{eqn_theo_both_pf_10}
   \dot{V}_{\bbarx,\bar{\lambda}} (x(t),\lambda(t))
      = (x(t) - \bbarx)^T\dot{x}(t) + (\lambda(t) -\bar{\lambda})^T\dot{\lambda}(t).
\end{equation}
If the trajectories $x(t)$ and $\lambda(t)$ follow from the saddle point dynamical system defined by \eqref{eqn_action_descent} and \eqref{eqn_multiplier_ascent} respectively we can substitute the action and multiplier derivatives by their corresponding values and reduce \eqref{eqn_theo_both_pf_10} to
%
%
\begin{align}\label{eqn_theo_survival_opti_pf_11}
\dot{V}_{\bbarx,\bar{\lambda}}(x(t),\lambda(t)) &= (x(t) - \bbarx )^T \Pi_X ( x, -\varepsilon \ccalL_{x}(t,x(t),\lambda(t))) \nonumber \\ 
 &+(\lambda(t)-\bar{\lambda})^T \Pi_{\Lambda} (\lambda,\varepsilon \ccalL_{\lambda}(t,x(t),\lambda(t))).
\end{align}
Then, use Lemma \ref{lemma:big_lemma} for both $X$ and $\Lambda$ to write
%
%
\begin{align}\label{eqn_theo_survival_opti_pf_11}
\dot{V}_{\bbarx,\bar{\lambda}} (x(t),\lambda(t))&\leq -\varepsilon (x(t)-\bbarx)^T\ccalL_{x}(t,x(t),\lambda(t))  \\ \nonumber
&+\varepsilon(\lambda(t)-\bar{\lambda})^T \ccalL_{\lambda}(t,x(t),\lambda(t)). 
\end{align}
%
Since $\mathcal{L}(t,x(t),\lambda(t)) $ is a convex function, \eqref{eqn_def_subgradient} takes the form
%
%
%
\begin{equation}\label{eqn_auxiliar_equation_theorem}
-(x(t)-\bar{x} )^T\ccalL_{x}(t,x(t),\lambda(t))  \leq  \mathcal{L}(t,\bar{x},\lambda(t)) - \mathcal{L}(t,x(t),\lambda(t)).
\end{equation}
From the linearity of the Lagrangian with respect to $\lambda$ we have
\begin{equation}\label{eqn_auxiliar_equation_theorem2}
(\lambda(t)-\bar{\lambda})^T \ccalL_{\lambda}(t,x(t),\lambda(t)) = \ccalL (t,x(t),\lambda(t)) - \ccalL(t,x(t),\bar{\lambda}).
\end{equation}
Combine expressions \eqref{eqn_auxiliar_equation_theorem} and \eqref{eqn_auxiliar_equation_theorem2} to reduce \eqref{eqn_theo_survival_opti_pf_11} to
\begin{equation}
\dot{V}_{\bbarx,\bar{\lambda}}(x(t),\lambda(t)) \leq \varepsilon\left(\mathcal{L}(t,\bar{x},\lambda(t)) - \ccalL(t,x(t),\bar{\lambda})\right).
\end{equation}
%
%
%
%
Substituting the Lagrangians by the expression \eqref{eqn_lagrangian}
\begin{align}
\dot{V}_{\bbarx,\bar{\lambda}}(x(t),\lambda(t))\leq \varepsilon[f_0(t,\bbarx) + \lambda^T (t)f(t,\bbarx) \nonumber \\
-f_0(t,x(t)) - \bar{\lambda}^T f(t,x(t))]. 
\end{align}
Rewriting the above inequality and integrating both sides with respect to the time from time $t=0$ to $t=T$, we obtain
\begin{align}
\int_0^T f_0(t,x(t))-f_0(t,\bbarx)+\bar{\lambda}^T f(t,x(t)) - \lambda^T (t)f(t,\bbarx) dt \nonumber\\ 
\leq -\frac{1}{\varepsilon}\int_0^T \dot{V}_{\bbarx,\bar{\lambda}}(x(t),\lambda (t))   dt .
\end{align}
Using the result \eqref{eqn_inequality_chain} the above equation reduces to yields
\begin{align}
\int_0^T f_0(t,x(t))-f_0(t,\bbarx)&+\bar{\lambda}^T f(t,x(t)) - \lambda^T(t) f(t,\bbarx) dt \nonumber\\ &
\leq \frac{1}{\varepsilon}V_{\bbarx,\bar{\lambda}}(x(0),\lambda(0)).
\label{eqn_fundamental}
\end{align}
Since \eqref{eqn_fundamental} holds for any $\bar{x} \in X$ and any $\bar{\lambda} \in \Lambda$, it holds for $\bar{x} = x^*$, $\bar{\lambda} = 0$. Since $\lambda^T(t)f(t,x^*) \,dt \leq 0 $ $\; \forall t\in[0,T]$ we can lower bound the left hand side of \eqref{eqn_fundamental} to obtain:
\begin{equation}
\int_0^Tf_0(t,x(t)) - f_0(t,x^*) dt \leq \frac{1}{\varepsilon} V_{x^*,0}(x(0),\lambda(0)).
\end{equation}
Notice that the left hand side of the above equation is the definition of regret given in \eqref{eqn_continuous_regret}. Thus, we have showed that \eqref{eqn_regret_upper_bound_full_problem} holds and since the right hand side of the above equation is a constant for all $T $ we proved that the trajectory generated by the saddle point controller is strongly optimal. It remains to prove that the trajectory generated is feasible. Choosing $\bar{x} = x^*$ in \eqref{eqn_fundamental} and using the result of Lemma \ref{lemma:regret_lower_bound} yields
\begin{align}
\int_0^T\bar{\lambda}^Tf(t,x(t)) -& \lambda^T(t)f(t,x^*) \,dt  \nonumber \\
&\leq \frac{1}{\varepsilon} V_{x^*,\bar{\lambda}}(x(0),\lambda(0)) +KT.
\end{align}
Since $\lambda^T(t)f(t,x^*) \,dt \leq 0  \; \forall t\in[0,T]$ the left hand side of the above equation is lower bounded by $\bar{\lambda}^T\int_0^Tf(t,x(t))$, yielding 
\begin{equation}\label{eqn_algun_numero}
\bar{\lambda}^T\int_0^Tf(t,x(t)) dt \leq \left( V_{x^*,\bar{\lambda}} (x(0),\lambda(0))\right) / \varepsilon +KT.
\end{equation}
Now let's choose $\bar{\lambda} = \left[ \ccalF_T\right]^+= \left[\int_0^Tf(t,x(t)) \,dt \right]^+$. Let $I = \{i=1..m | \int_0^T f_i(t,x(t)) \,dt \geq 0) \}$. Notice that if $i \not\in I$, then $\bar{\lambda}_i\int_0^Tf_i(t,x(t)) \,dt =0$. On the other hand, if $i \in I$, $\bar{\lambda}_i\int_0^Tf_i(t,x(t)) \,dt = \left( \int_0^Tf_i(t,x(t)) \,dt \right)^2 \geq 0$. Thus,
\begin{equation}
\bar{\lambda}^T\int_0^Tf(t,x(t)) dt = \left\| \left[ \ccalF_T\right]^+\right\|^2.
\end{equation}
Write then inequality \eqref{eqn_algun_numero} for the particular choice of $\bar{\lambda}$ as
\begin{equation}
\left\| \left[ \ccalF_T\right]^+\right\|^2 \leq \frac{1}{\varepsilon} V_{x^*,\left[\ccalF_T\right]^+}(x(0),\lambda(0)) +KT.
\end{equation}
Use the definition of the energy function $V_{\bar{x},\bar{\lambda}} \left(x,\lambda\right)$ given in \eqref{eqn_lyapunov} to write the above inequality as
\begin{equation}
\left\| \left[\ccalF_T\right]^+ \right\|^2 \leq \frac{1}{\varepsilon}\left( \left\|x(0)-x^*\right\|^2 +\left\|\left[\ccalF_T\right]^+ - \lambda(0)\right\|^2\right)+KT.
\end{equation}
Expand the second square in the right hand side of the above expression and re arrange terms to write
\begin{equation}
\begin{split}
&\left\| \left[\ccalF_T\right]^+ \right\|^2 +  \lambda^T(0)\left[\ccalF_T\right]^+\frac{2}{\varepsilon-1} \\
&\leq \frac{1}{\varepsilon-1}\left( \left\|x(0)-x^*\right\|^2 +\left\|\lambda(0) \right\|^2 \right)+KT\frac{\varepsilon}{\varepsilon-1}.
\end{split}
\end{equation}
Adding in both sides of the above inequality $\left\| \lambda(0) \right\|^2 \left(\frac{1}{\varepsilon-1}\right)^2$, then factorizing the left hand side the above inequality yields
\begin{equation}
\begin{split}
\left\|\left[\ccalF_T\right]^+  + \lambda(0) \frac{1}{\varepsilon-1}\right\|^2 &\leq\frac{1}{\varepsilon-1}\left\|x(0)-x^*\right\|^2  +KT\frac{\varepsilon}{\varepsilon-1}\\
&+\frac{\left\|\lambda(0) \right\|^2}{\varepsilon-1}\left(1+\frac{1}{\varepsilon-1}\right).
\end{split}
\end{equation}
%
%
%
%
Since the term $\lambda(0)/\left(\varepsilon-1\right)$ is constant with respect to $T$ it is the case that the norm of $\left[\ccalF_T\right]^+$ is bounded by a function that grows like $\sqrt{T}$. On the other hand it also holds that $\|\left[\ccalF_T\right]^+\|$ is bounded by a constant function of the gain $\varepsilon$. These observations lead to the conclusion that
\begin{equation}
\| \left[ \ccalF_T\right]^+\| \leq \ccalO\left(\sqrt{KT},\varepsilon^0\right).
\end{equation}
The above inequality implies that for any $i\in I$ it is the case that $\ccalF_{T,i} \leq \ccalO\left(\sqrt{KT},\varepsilon^0\right)$.
If $i \not\in I$ it means that $\mathcal{F}_{T,i} < 0$ and it trivially satisfies \eqref{eqn_penalty_bound}. Which proves that the trajectories that are solution of the saddle point controller defined by \eqref{eqn_action_descent} and \eqref{eqn_multiplier_ascent} are feasible since they are bounded by a sublinear function of the time horizon for all $T$.
\bibliographystyle{ieeetr}
\bibliography{bib}

\begin{thebibliography}{10}

\bibitem{arrow_hurwicz}
K.~J. Arrow and L.~Hurwicz, {\em Studies in linear and nonlinear programming}.
\newblock CA: Stanford University Press, 1958.

\bibitem{rimon1992exact}
E.~Rimon and D.~E. Koditschek, ``Exact robot navigation using artificial
  potential functions,'' {\em Robotics and Automation, IEEE Transactions on},
  vol.~8, no.~5, pp.~501--518, 1992.

\bibitem{warren1989global}
C.~W. Warren, ``Global path planning using artificial potential fields,'' in
  {\em Robotics and Automation, 1989. Proceedings., 1989 IEEE International
  Conference on}, pp.~316--321, IEEE, 1989.

\bibitem{Khatib:1986:ROA:6806.6812}
O.~Khatib, ``Real-time obstacle avoidance for manipulators and mobile robots,''
  {\em Int. J. Rob. Res.}, vol.~5, pp.~90--98, Apr. 1986.

\bibitem{ge2000new}
S.~S. Ge and Y.~J. Cui, ``New potential functions for mobile robot path
  planning,'' {\em IEEE Transactions on robotics and automation}, vol.~16,
  no.~5, pp.~615--620, 2000.

\bibitem{vadakkepat2000evolutionary}
P.~Vadakkepat, K.~C. Tan, and W.~Ming-Liang, ``Evolutionary artificial
  potential fields and their application in real time robot path planning,'' in
  {\em Evolutionary Computation, 2000. Proceedings of the 2000 Congress on},
  vol.~1, pp.~256--263, IEEE, 2000.

\bibitem{hirsch2004differential}
M.~W. Hirsch, S.~Smale, and R.~L. Devaney, {\em Differential equations,
  dynamical systems, and an introduction to chaos}, vol.~60.
\newblock Academic press, 2004.

\bibitem{krstic2000stability}
M.~Krsti{\'c} and H.-H. Wang, ``Stability of extremum seeking feedback for
  general nonlinear dynamic systems,'' {\em Automatica}, vol.~36, no.~4,
  pp.~595--601, 2000.

\bibitem{ariyur2003real}
K.~B. Ariyur and M.~Krstic, {\em Real-time optimization by extremum-seeking
  control}.
\newblock John Wiley \& Sons, 2003.

\bibitem{tan2006non}
Y.~Tan, D.~Ne{\v{s}}i{\'c}, and I.~Mareels, ``On non-local stability properties
  of extremum seeking control,'' {\em Automatica}, vol.~42, no.~6,
  pp.~889--903, 2006.

\bibitem{nedic2009subgradient}
A.~Nedi{\'c} and A.~Ozdaglar, ``Subgradient methods for saddle-point
  problems,'' {\em Journal of optimization theory and applications}, vol.~142,
  no.~1, pp.~205--228, 2009.

\bibitem{uzawa1958iterative}
H.~Uzawa, ``Iterative methods for concave programming,'' {\em Studies in linear
  and nonlinear programming}, vol.~6, 1958.

\bibitem{maistroskii1977gradient}
D.~Maistroskii, ``Gradient methods for finding saddle points,'' {\em Matekon},
  vol.~14, no.~1, pp.~3--22, 1977.

\bibitem{feijer2010stability}
D.~Feijer and F.~Paganini, ``Stability of primal--dual gradient dynamics and
  applications to network optimization,'' {\em Automatica}, vol.~46, no.~12,
  pp.~1974--1981, 2010.

\bibitem{boyd2004convex}
S.~Boyd and L.~Vandenberghe, {\em Convex optimization}.
\newblock Cambridge university press, 2004.

\bibitem{atanasov2012stochastic}
N.~Atanasov, J.~Le~Ny, N.~Michael, and G.~J. Pappas, ``Stochastic source
  seeking in complex environments,'' in {\em Robotics and Automation (ICRA),
  2012 IEEE International Conference on}, pp.~3013--3018, IEEE, 2012.

\bibitem{azuma2012stochastic}
S.-i. Azuma, M.~S. Sakar, and G.~J. Pappas, ``Stochastic source seeking by
  mobile robots,'' {\em Automatic Control, IEEE Transactions on}, vol.~57,
  no.~9, pp.~2308--2321, 2012.

\bibitem{Liu20101443}
S.-J. Liu and M.~Krstic, ``Stochastic source seeking for nonholonomic
  unicycle,'' {\em Automatica}, vol.~46, no.~9, pp.~1443 -- 1453, 2010.

\bibitem{robbins1951stochastic}
H.~Robbins and S.~Monro, ``A stochastic approximation method,'' {\em The annals
  of mathematical statistics}, pp.~400--407, 1951.

\bibitem{schmidt2013minimizing}
M.~Schmidt, N.~L. Roux, and F.~Bach, ``Minimizing finite sums with the
  stochastic average gradient,'' {\em arXiv preprint arXiv:1309.2388}, 2013.

\bibitem{konevcny2013semi}
J.~Kone{\v{c}}n{\`y} and P.~Richt{\'a}rik, ``Semi-stochastic gradient descent
  methods,'' {\em arXiv preprint arXiv:1312.1666}, 2013.

\bibitem{mokhtari2014res}
A.~Mokhtari and A.~Ribeiro, ``Res: Regularized stochastic bfgs algorithm,''
  {\em Signal Processing, IEEE Transactions on}, vol.~62, no.~23,
  pp.~6089--6104, 2014.

\bibitem{blackwell1956analog}
D.~Blackwell {\em et~al.}, ``An analog of the minimax theorem for vector
  payoffs,'' {\em Pacific Journal of Mathematics}, vol.~6, no.~1, pp.~1--8,
  1956.

\bibitem{vapnik2000nature}
V.~Vapnik, {\em The nature of statistical learning theory}.
\newblock Springer, 2000.

\bibitem{shalev2011online}
S.~Shalev-Shwartz, ``Online learning and online convex optimization,'' {\em
  Foundations and Trends in Machine Learning}, vol.~4, no.~2, pp.~107--194,
  2011.

\bibitem{Zinkevich03}
M.~Zinkevich, ``Online convex programming and generalized infinitesimal
  gradient ascent,'' in {\em ICML}, pp.~928--936, 2003.

\bibitem{hazan2007logarithmic}
E.~Hazan, A.~Agarwal, and S.~Kale, ``Logarithmic regret algorithms for online
  convex optimization,'' {\em Machine Learning}, vol.~69, no.~2-3,
  pp.~169--192, 2007.

\bibitem{low1999optimization}
S.~H. Low and D.~E. Lapsley, ``Optimization flow control- i: basic algorithm
  and convergence,'' {\em IEEE/ACM Transactions on Networking (TON)}, vol.~7,
  no.~6, pp.~861--874, 1999.

\bibitem{chiang2007layering}
M.~Chiang, S.~H. Low, A.~R. Calderbank, and J.~C. Doyle, ``Layering as
  optimization decomposition: A mathematical theory of network architectures,''
  {\em Proceedings of the IEEE}, vol.~95, no.~1, pp.~255--312, 2007.

\bibitem{chambolle2011first}
A.~Chambolle and T.~Pock, ``A first-order primal-dual algorithm for convex
  problems with applications to imaging,'' {\em Journal of Mathematical Imaging
  and Vision}, vol.~40, no.~1, pp.~120--145, 2011.

\bibitem{viossat2013no}
Y.~Viossat and A.~Zapechelnyuk, ``No-regret dynamics and fictitious play,''
  {\em Journal of Economic Theory}, vol.~148, no.~2, pp.~825--842, 2013.

\bibitem{sorin2009exponential}
S.~Sorin, ``Exponential weight algorithm in continuous time,'' {\em
  Mathematical Programming}, vol.~116, no.~1-2, pp.~513--528, 2009.

\bibitem{kwon2014continuous}
J.~Kwon and P.~Mertikopoulos, ``A continuous-time approach to online
  optimization,'' {\em arXiv preprint arXiv:1401.6956}, 2014.

\bibitem{popkov2005gradient}
A.~Y. Popkov, ``Gradient methods for nonstationary unconstrained optimization
  problems,'' {\em Automation and Remote Control}, vol.~66, no.~6,
  pp.~883--891, 2005.

\bibitem{fazlyab2015interior}
M.~Fazlyab, S.~Paternain, V.~M. Preciado, and A.~Ribeiro, ``Interior point
  method for dynamic constrained optimization in continuous time,'' {\em arXiv
  preprint arXiv:1510.01396}, 2015.

\bibitem{zavala2010real}
V.~M. Zavala and M.~Anitescu, ``Real-time nonlinear optimization as a
  generalized equation,'' {\em SIAM Journal on Control and Optimization},
  vol.~48, no.~8, pp.~5444--5467, 2010.

\bibitem{borkar2008stochastic}
V.~S. Borkar {\em et~al.}, ``Stochastic approximation,'' {\em Cambridge Books},
  2008.

\bibitem{Zhang95}
D.~Zhang and A.~Nagurney, ``On the stability of projected dynamical systems,''
  {\em J. Optim. Theory Appl.}, vol.~85, pp.~97--124, Apr. 1995.

\bibitem{hart2001general}
S.~Hart and A.~Mas-Colell, ``A general class of adaptive strategies,'' {\em
  Journal of Economic Theory}, vol.~98, no.~1, pp.~26--54, 2001.

\bibitem{young1993evolution}
H.~P. Young, ``The evolution of conventions,'' {\em Econometrica: Journal of
  the Econometric Society}, pp.~57--84, 1993.

\bibitem{DM:11}
D.~Mellinger and V.~Kumar, ``Minimum snap trajectory generation and control for
  quadrotors,'' in {\em Proceedings of the IEEE International Conference on
  Robotics and Automation (ICRA)}, May 2011.

\end{thebibliography}
\begin{IEEEbiography}{Santiago Paternain} 
received the B.Sc. degree in electrical engineering from Universidad de la Rep\'ublica Oriental del Uruguay, Montevideo, Uruguay in 2012. Since August 2013, he has been working toward the Ph.D. degree in the Department of Electrical and Systems Engineering, University of Pennsylvania. His research interests include optimization and control of dynamical systems. 
\end{IEEEbiography}
\begin{IEEEbiography}{Alejandro Ribeiro} received the B.Sc. degree in electrical engineering from the Universidad de la Rep\'ublica Oriental del Uruguay, Montevideo, in 1998 and the M.Sc. and Ph.D. degree in electrical engineering from the Department of Electrical and Computer Engineering, the University of Minnesota, Minneapolis in 2005 and 2007. From 1998 to 2003, he was a member of the technical staff at Bellsouth Montevideo. After his M.Sc. and Ph.D studies, in 2008 he joined the University of Pennsylvania (Penn), Philadelphia, where he is currently the Rosenbluth Associate Professor at the Department of Electrical and Systems Engineering. His research interests are in the applications of statistical signal processing to the study of networks and networked phenomena. His current research focuses on wireless networks, network optimization, learning in networks, networked control, robot teams, and structured representations of networked data structures. Dr. Ribeiro received the 2012 S. Reid Warren, Jr. Award presented by Penn's undergraduate student body for outstanding teaching, the NSF CAREER Award in 2010, and student paper awards at the 2013 American Control Conference (as adviser), as well as the 2005 and 2006 International Conferences on Acoustics, Speech and Signal Processing. Dr. Ribeiro is a Fulbright scholar and a Penn Fellow
\end{IEEEbiography}

\end{document}